\documentclass[a4paper,fleqn]{cas-sc}

\usepackage[numbers]{natbib}

\usepackage{amsmath,amssymb}
\usepackage{amsthm}

\usepackage[capitalise,nameinlink,noabbrev]{cleveref}

\usepackage{tikz}
\tikzset{
every node/.style={draw,circle,inner sep=2pt}
}

\usepackage[normalem]{ulem}
\usepackage{cancel}
\usepackage{xcolor}

\def\tsc#1{\csdef{#1}{\textsc{\lowercase{#1}}\xspace}}
\tsc{WGM}
\tsc{QE}

\newtheorem{theorem}{Theorem}[section]
\newtheorem{lemma}[theorem]{Lemma}
\newtheorem{proposition}[theorem]{Proposition}
\newtheorem{corollary}[theorem]{Corollary}

\theoremstyle{definition}
\newtheorem{definition}[theorem]{Definition}

\newtheorem{example}[theorem]{Example}

\newcommand{\trans}{^\top}

\newcommand{\bone}{\mathbf{1}}

\newcommand{\bx}{\mathbf{x}}

\newcommand{\bu}{\mathbf{u}}
\newcommand{\bv}{\mathbf{v}}

\newcommand{\bZ}{\mathbf{Z}}
\newcommand{\bA}{\mathbf{A}}
\newcommand{\bR}{\mathbf{R}}

\newcommand{\bI}{\mathbf{I}}
\newcommand{\bDtwo}{\mathbf{D}_2}
\newcommand{\bX}{\mathbf{X}}
\newcommand{\bM}{\mathbf{M}}
\newcommand{\bT}{\mathbf{T}}
\newcommand{\bQ}{\mathbf{Q}}
\newcommand{\bP}{\boldsymbol{\Phi}}
\newcommand{\bOmega}{\boldsymbol{\Omega}}
\newcommand{\bpsi}{\boldsymbol{\psi}}
\newcommand{\bphi}{\boldsymbol{\phi}}

\newcommand{\rank}{\operatorname{rank}}

\begin{document}

\let\WriteBookmarks\relax
\def\floatpagepagefraction{1}
\def\textpagefraction{.001}

\shorttitle{Spline Parity and Oscillatory}

\shortauthors{Huang et. al.}


\title[mode=title]{Sturm–Liouville-Type Parity and Oscillation of a Cubic Spline Eigenbasis}


\author[1,2]{Shih-Hao Huang}


\credit{}

\affiliation[1]{%
    organization={Department of Mathematics, National Central University},
    city={Taoyuan},
    postcode={320317},
    country={Taiwan}
}

\affiliation[2]{%
    organization={Graduate Institute of Statistics, National Central University},
    city={Taoyuan},
    postcode={320317},
    country={Taiwan}
}

\author[3]{Jephian C.-H. Lin}


\credit{}

\affiliation[3]{%
    organization={Department of Applied Mathematics, National Yang Ming Chiao Tung University},
    city={Hsinchu},
    postcode={300093},
    country={Taiwan}
}

\author[4]{ShengLi Tzeng}[orcid= 0000-0003-3879-9420]

\cormark[1]

\ead{slt.cmu@gmail.com}

\credit{}

\affiliation[4]{%
    organization={Department of Applied Mathematics and Graduate Institute of Statistics,  National Chung
Hsing University},
    city={ Taichung},
    postcode={402202},
    country={Taiwan}
}

\author[5]{Tzu-Lun Yuan}


\credit{}

\affiliation[5]{%
    organization={Department of Statistics, Tunghai University},
    city={ Taichung},
    postcode={40704},
    country={Taiwan}
}


\cortext[1]{Corresponding author.}


\begin{abstract}
We study the eigen-structure of the penalty matrix arising from cubic smoothing splines on equally spaced knots. Using   purely matrix-theoretic arguments, we show that its positive eigenvalues are simple, that the associated eigenvectors  alternate between even and odd, and that the eigenvector for the $k$th largest eigenvalue has exactly $k+1$ sign changes. The approach provides a direct and transparent alternative to existing variational proofs of the oscillation property. These results show that equally spaced knots support a spline basis with both   a parity structure and an oscillation pattern.
\end{abstract}




\begin{keywords}
Oscillatory matrices\\
Totally nonnegative matrices\\
Demmler–Reinsch basis\\
Parity of eigenvectors\\
Spline smoothing
\end{keywords}

\maketitle

\section{Introduction}

Classical Sturm--Liouville eigenfunctions, including trigonometric systems and classical orthogonal polynomials, provide canonical examples of basis functions with rich oscillatory structures. The Sturm oscillation theorem establishes that successive eigenfunctions possess an increasing number of interior zeros, thereby inducing a natural ordering according to oscillatory complexity \cite{Sturm1836,Zettl2005}. On symmetric intervals, these eigenfunctions furthermore exhibit alternating even--odd parity, yielding a direct decomposition of the approximation space into symmetric and antisymmetric subspaces \cite{Szego1975}. This parity decomposition allows the even and odd components of a target function to be approximated independently, a feature that plays a central role in Fourier expansions and classical spectral methods on symmetric domains \cite{Boyd2001,Trefethen2019}. Consequently, Sturm--Liouville eigenfunctions provide not only a hierarchical organization of approximation modes according to their oscillatory behavior but also a structured basis adapted to the symmetry of the underlying problem.

Motivated by these advantages, it is natural to seek spline bases that possess analogous oscillation and parity properties. Such bases would combine the locality and flexibility of splines with an intrinsic ordering by oscillatory complexity and a natural decomposition into even and odd subspaces. However, unlike trigonometric systems  and polynomials, spline functions are defined piecewise and must satisfy prescribed smoothness conditions across knots. These continuity constraints destroy the global differential structure enjoyed by polynomial and trigonometric eigenfunctions, making the construction of spline bases with Sturm--Liouville-type oscillation and parity properties substantially more challenging.

In this article, we show that a set of linear combinations of natural cubic splines  consists entirely of functions that are either even or odd. These combinations are derived from the eigendecomposition of the projected kernel matrix of smoothing splines. Since the even and odd eigenvectors (Definition~\ref{def:even-odd}) of this matrix appear alternately, the corresponding eigenfunctions inherit the same alternating even-odd structure. It largely uses results on oscillatory matrices, which were
introduced in the book \cite{GK02}, and on centrosymmetric  matrices in  \cite{CB76}.

We begin by introducing the matrices that will be used throughout the proof.

\begin{definition}
Define the following matrices:
\[
\bOmega=\bQ\bP\bQ,\qquad
\bP=\bigl[\,|i-j|^3\,\bigr],\quad
\bQ=\bI-\bX(\bX^{\trans}\bX)^{-1}\bX^{\trans}, \quad
\bX=\begin{bmatrix}1&1&\cdots&1\\1&2&\cdots&n\end{bmatrix}^{\trans},
\]
where $\bOmega$, $\bP$, and $\bQ$ are $n\times n$.
\end{definition}

The matrix $\bP$ is the  kernel matrix generated by the cubic
radial function $\phi(r)=|r|^3$ evaluated pairwise at the equally spaced knots $1,\ldots,n$,
and $\bQ$ is the projection onto the orthogonal complement of the linear
polynomials spanned by the columns of $\bX$; this projection  restores positive
semidefiniteness. Consequently $\bOmega=\bQ\bP\bQ$
is, up to a normalizing constant, exactly the roughness-penalty
(Demmler--Reinsch) matrix arising from the reproducing kernel of the cubic
smoothing spline at these knots \cite{RC67,DR75}: the eigenvectors of $\bOmega$ are themselves the coefficient vectors of the Demmler–Reinsch spline eigenfunctions in this kernel representation (Section \ref{sec:spline-parity}).   This connection links our spectral results for 
$\bOmega$  to the structure of the associated  spline basis.

Our main result (Theorem~\ref{thm:main}) shows that $\bOmega$ is positive
semidefinite with nullity $2$, that every eigenvector for a positive
eigenvalue is simple and either even or odd, and that the eigenvector for
the $k$th largest positive eigenvalue   is an even (odd) vector and has $k+1$ sign changes exactly when $k-1$ is an even (odd) number.

This construction has been employed in previous works, including \cite{TH2018,yuan2020}. Those studies, however, focused on spline spaces over nonuniform knot configurations, where the parity structure arising in the uniformly spaced case was neither observed nor investigated. Moreover, several of the structural properties established in this paper, namely the semidefiniteness, nullity,   and simplicity of the spectrum, have appeared previously in the literature; see, for example, \cite{SW53,RC67,DR75}. Whereas the existing proofs proceed through a sequence of general results in oscillatory matrix theory together with a broader variational framework, our proof takes a direct and self-contained route, relying entirely on matrix-theoretic arguments. As a result, the role of the spline kernel matrix in establishing these properties becomes explicit, and the corresponding results on the number of sign changes and the newly identified parity are obtained naturally.

\section{Preliminaries}

In this section,   we introduce three classes of matrices that underlie the proofs of the results developed later:  centrosymmetric, totally nonnegative, and oscillatory matrices, and recall their basic properties.

\begin{definition}
\label{def:even-odd}
Let $\bZ$ be the matrix whose $i,j$-entry is $1$ whenever
$i+j=n+1$ and zero otherwise. A vector $\bv\in\mathbb{R}^n$ is
\emph{even} if $\bZ\bv=\bv$ and \emph{odd} if $\bZ\bv=-\bv$.
An $n\times n$ matrix $\bA$ is \emph{centrosymmetric} if
\[
\bZ\bA\bZ=\bA.
\]
\end{definition}

In \cite{CB76}, it was shown that   a symmetric centrosymmetric matrix admits an eigenbasis consisting entirely of even or odd vectors. Although a non-symmetric
matrix need not be diagonalizable, every eigenvector of a centrosymmetric
matrix, whenever it exists, is either even or odd. Throughout the paper, ``eigenvector'' refers to a right eigenvector for nonsymmetric matrices.  We include the proof for
completeness.

\begin{proposition}
\label{prop:csym}
Let $\bA$ be a centrosymmetric matrix. Then every eigenvector of $\bA$ is either
even or odd.
\end{proposition}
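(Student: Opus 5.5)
The plan is to use that centrosymmetry says exactly that $\bA$ commutes with the involution $\bZ$. Since $\bZ^2=\bI$ and $\bZ=\bZ^{-1}$, the condition $\bZ\bA\bZ=\bA$ is equivalent to $\bA\bZ=\bZ\bA$. Let $\bA\bv=\lambda\bv$ with $\bv\neq 0$. Then
\[
\bA(\bZ\bv)=\bZ\bA\bv=\lambda\,\bZ\bv ,
\]
so $\bZ$ maps the eigenspace $E_\lambda=\ker(\bA-\lambda\bI)$ into itself. Every $\bv\in E_\lambda$ therefore splits as
\[
\bv=\tfrac12(\bv+\bZ\bv)+\tfrac12(\bv-\bZ\bv),
\]
where the first summand is even, the second is odd, and both lie in $E_\lambda$. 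This gives $E_\lambda=E_\lambda^{+}\oplus E_\lambda^{-}$, the even and odd parts of the eigenspace.

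Next I would treat the case where $\lambda$ has geometric multiplicity one, which is the case the rest of the paper uses. There $E_\lambda=\operatorname{span}\{\bv\}$, so invariance forces $\bZ\bv=c\bv$ for some scalar $c$. Applying $\bZ$ again gives $\bv=\bZ^2\bv=c^2\bv$, hence $c=\pm1$, and $\bv$ is even or odd. This argument never uses symmetry or diagonalizability of $\bA$, which matches the paper's remark about nonsymmetric matrices.

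The main obstacle is that the statement, read literally for \emph{every} eigenvector, is false when an eigenspace has dimension at least two. For example, $\bA=\bI$ with $n\ge 2$ is centrosymmetric, yet $\bv=(1,0,\dots,0)\trans$ is an eigenvector that is neither even nor odd. I would therefore prove the proposition in the following precise form. Every eigenvector belonging to an eigenvalue of geometric multiplicity one is even or odd. More generally, each eigenspace is the direct sum of its even and odd parts, so it has a basis of even and odd vectors. The second form recovers the \cite{CB76} statement for symmetric $\bA$. This corrected version is all that is needed later, since the main theorem first establishes that the positive eigenvalues of $\bOmega$ are simple and only then invokes parity.
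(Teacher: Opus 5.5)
Your proof is correct, and so is your counterexample. With $\bA=\bI$ and $n\ge 2$, the vector $(1,0,\dots,0)\trans$ is an eigenvector of a centrosymmetric matrix that is neither even nor odd, so the proposition as literally stated is false.

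The paper's proof has exactly the gap you point to. It shows that $\mathbb{R}_{\rm even}$ and $\mathbb{R}_{\rm odd}$ are $\bA$-invariant and then concludes ``hence every eigenvector belongs to one of these two invariant subspaces.'' Invariance alone does not give this. To finish the argument, write $\bv=\bv_++\bv_-$ with $\bv_\pm$ in the two subspaces. Uniqueness of the decomposition then gives $\bA\bv_\pm=\lambda\bv_\pm$. So one part is forced to vanish only when $\lambda$ has geometric multiplicity one, which is precisely the hypothesis you add.

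Both routes rest on $\bA\bZ=\bZ\bA$, read in two dual ways:
\begin{itemize}
\item The paper reads it as $\bA$ preserving the parity subspaces. This makes $\bA$ block diagonal with respect to $\mathbb{R}^n=\mathbb{R}_{\rm even}\oplus\mathbb{R}_{\rm odd}$, which is what yields an even/odd eigenbasis in the symmetric case.
\item You read it as $\bZ$ preserving each eigenspace. This isolates the multiplicity condition and settles the one-dimensional case in one line via $\bZ\bv=c\bv$, $c^2=1$. Your splitting $E_\lambda=E_\lambda^+\oplus E_\lambda^-$ then recovers the eigenbasis statement for symmetric $\bA$ as well.
\end{itemize}

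Your corrected version is all the paper needs, with one small adjustment to your closing remark. In the proof of Theorem~\ref{thm:main}(2), the proposition is invoked for $\bT^{-1}\bM$, not for $\bOmega$. The eigenvalues of $\bT^{-1}\bM$ are simple by Theorem~\ref{thm:osspec}, since it is oscillatory by Corollary~\ref{cor:tinvmos}. The simplicity of the positive eigenvalues of $\bOmega$ is itself derived from this via Lemma~\ref{lem:asim}, so the conclusion is unaffected.
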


\begin{proof}
Let $\mathbb{R}_{\rm even}$ and $\mathbb{R}_{\rm odd}$ denote the subspaces
of $\mathbb{R}^n$ consisting of all even vectors and all odd vectors,
respectively. Then
$\dim(\mathbb{R}_{\rm even})=\lceil n/2\rceil$,
$\dim(\mathbb{R}_{\rm odd})=\lfloor n/2\rfloor$, and
$\mathbb{R}^n=\mathbb{R}_{\rm even}\oplus\mathbb{R}_{\rm odd}$.
We show both subspaces are $\bA$-invariant. If $\bv\in\mathbb{R}_{\rm even}$,
then $\bZ\bv=\bv$ gives
$\bZ\bA\bv=\bZ\bA\bZ\bZ\bv=\bA(\bZ\bv)=\bA\bv$, so $\bA\bv\in\mathbb{R}_{\rm even}$.
Similarly, if $\bv\in\mathbb{R}_{\rm odd}$, then $\bZ\bv=-\bv$ gives
$\bZ\bA\bv=\bZ\bA\bZ\bZ\bv=\bA(\bZ\bv)=-\bA\bv$, so $\bA\bv\in\mathbb{R}_{\rm odd}$.
Hence every eigenvector belongs to one of these two invariant subspaces.
\end{proof}

Recall that a minor of a matrix is the determinant of one of its square
submatrices, as in the definition below.

\begin{definition}[Definition~4, Section II.2 of \cite{GK02}]
\label{def:tn}
A matrix is called \emph{totally nonnegative} (respectively,
\emph{totally positive}) if every minor is nonnegative (respectively,
positive).  
\end{definition}

Since every minor of a submatrix is also a minor of the original matrix, every submatrix of a totally nonnegative (respectively,
totally positive) matrix is again totally nonnegative (respectively,
totally positive).

\begin{proposition}[Proposition~1$^\circ$, Section II.2 of \cite{GK02}]
\label{prop:tnprod}
The product of two totally nonnegative matrices is totally nonnegative.
\end{proposition}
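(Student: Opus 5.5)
The natural route is the Cauchy--Binet formula, which expresses every minor of a product as a sum of products of minors of the factors. Let $\bA$ be $m\times p$ and $\mathbf{B}$ be $p\times q$, both totally nonnegative, and put $\mathbf{C}=\bA\mathbf{B}$. Fix a size $r$ and row indices $i_1<\cdots<i_r$ and column indices $j_1<\cdots<j_r$.

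First I would state Cauchy--Binet in the form needed. If $r\le p$, then
\[
\det \mathbf{C}[i_1,\dots,i_r;\,j_1,\dots,j_r]=\sum_{k_1<\cdots<k_r}\det \bA[i_1,\dots,i_r;\,k_1,\dots,k_r]\,\det \mathbf{B}[k_1,\dots,k_r;\,j_1,\dots,j_r],
\]
where the sum runs over all increasing $r$-tuples drawn from $\{1,\dots,p\}$. If $r>p$, the submatrix $\mathbf{C}[\cdot;\cdot]$ is the product of an $r\times p$ matrix and a $p\times r$ matrix. Its rank is then at most $p<r$, so its determinant is $0$.

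Second, in the case $r\le p$, every term in the sum is a product of a minor of $\bA$ and a minor of $\mathbf{B}$. Both are nonnegative by Definition~\ref{def:tn}, so the sum is nonnegative. In either case every minor of $\mathbf{C}$ is nonnegative, and $\mathbf{C}$ is totally nonnegative.

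The only step with real content is Cauchy--Binet itself. If it is to be justified rather than quoted, I would write the columns of $\mathbf{C}[i;\,j]$ as linear combinations of the columns of $\bA[i;\,\cdot]$ with coefficients $b_{k j_s}$. Expanding by multilinearity gives a sum over tuples $(k_1,\dots,k_r)$. Tuples with repeated indices vanish by the alternating property. Grouping each remaining tuple by its sorted version produces the sign of the sorting permutation, and these signed sums reassemble into $\det \mathbf{B}[k;\,j]$. This is standard, so I expect no genuine obstacle beyond careful bookkeeping.
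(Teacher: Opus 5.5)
Your proof is correct and is the standard Cauchy--Binet argument, which is how Gantmacher--Krein prove it; the paper itself gives no proof and only cites that source. Your treatment of minors of order larger than the inner dimension, which vanish by the rank bound, and your multilinearity sketch of Cauchy--Binet are both sound.
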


Let $\bR$ denote the diagonal matrix whose $i$th diagonal entry equals
$(-1)^i$. Its size will always be clear from the context.

\begin{proposition}[Proposition~3$^\circ$, Section II.2 of \cite{GK02}]
\label{prop:tninv}
Let $\bA$ be an invertible matrix. Then $\bA^{-1}$ is totally nonnegative if and
only if $\bR\bA\bR$ is totally nonnegative.
\end{proposition}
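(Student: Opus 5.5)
The plan is to prove both directions from Jacobi's complementary-minor identity for the inverse. For index sets $I,J\subseteq\{1,\dots,n\}$ with $|I|=|J|$, write $\bA[I,J]$ for the corresponding submatrix, $I^c,J^c$ for the complements, and $\Sigma I$ for the sum of the indices in $I$. For invertible $\bA$, Jacobi's identity reads
\[
\det \bigl(\bA^{-1}[I,J]\bigr)=(-1)^{\Sigma I+\Sigma J}\,\frac{\det \bigl(\bA[J^c,I^c]\bigr)}{\det \bA}.
\]
Here the minor of an empty submatrix is taken to be $1$, so the case $|I|=n$ reduces to $\det\bA^{-1}=1/\det\bA$. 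The identity follows from the cofactor formula for the inverse together with Sylvester's determinant identity, or from the standard block-inverse/Schur-complement argument; I will cite it as classical. Since $\bR$ is diagonal with entries $\pm1$, we also have
\[
\det\bigl((\bR\bA\bR)[I,J]\bigr)=(-1)^{\Sigma I+\Sigma J}\det\bigl(\bA[I,J]\bigr),
\]
because $(\bR\bA\bR)[I,J]=\bR[I,I]\,\bA[I,J]\,\bR[J,J]$.

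\emph{Direction ($\Leftarrow$).} Suppose $\bR\bA\bR$ is totally nonnegative. Then $\det\bA=\det(\bR\bA\bR)\ge 0$, and since $\bA$ is invertible this gives $\det\bA>0$. Combining the two identities, for every $I,J$,
\[
\det\bigl(\bA^{-1}[I,J]\bigr)=\frac{\det\bigl((\bR\bA\bR)[J^c,I^c]\bigr)}{\det(\bR\bA\bR)}\ge 0,
\]
because the numerator is a minor of $\bR\bA\bR$ and the denominator is positive. Hence $\bA^{-1}$ is totally nonnegative.

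\emph{Direction ($\Rightarrow$).} Suppose $\bA^{-1}$ is totally nonnegative. Its determinant is then nonnegative and nonzero, so $\det\bA^{-1}>0$. Apply Jacobi's identity to the matrix $\bA^{-1}$, whose inverse is $\bA$:
\[
\det\bigl(\bA[I,J]\bigr)=(-1)^{\Sigma I+\Sigma J}\,\frac{\det\bigl(\bA^{-1}[J^c,I^c]\bigr)}{\det\bA^{-1}}.
\]
Multiplying by $(-1)^{\Sigma I+\Sigma J}$ gives
\[
\det\bigl((\bR\bA\bR)[I,J]\bigr)=\frac{\det\bigl(\bA^{-1}[J^c,I^c]\bigr)}{\det\bA^{-1}}\ge 0,
\]
so $\bR\bA\bR$ is totally nonnegative.

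The main point needing care is the sign bookkeeping in Jacobi's identity: the exponent must come out as $\Sigma I+\Sigma J$, so that it cancels exactly against the sign produced by conjugation with $\bR$. I would verify this by reducing, via row and column permutations, to the case where $I$ and $J$ are leading index sets. Two smaller points must also be checked: the boundary cases with empty complements, and the positivity of the relevant determinant in each direction, which is what makes division preserve the sign.
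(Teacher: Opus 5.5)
Your proof is correct. It is essentially the standard argument behind the cited result: the paper gives no proof of its own, and Gantmacher--Krein derive Proposition~3$^\circ$ from the complementary-minor formula for $\bA^{-1}$ together with the factor $(-1)^{\Sigma I+\Sigma J}$ that conjugation by $\bR$ introduces.

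The only step you leave implicit is in the ($\Leftarrow$) direction, where combining the two identities requires $(-1)^{\Sigma I^c+\Sigma J^c}=(-1)^{\Sigma I+\Sigma J}$. This holds because $\Sigma I+\Sigma I^c=n(n+1)/2$. Alternatively, you can obtain ($\Leftarrow$) directly from ($\Rightarrow$) by applying it to $\bR\bA^{-1}\bR$, whose inverse is $\bR\bA\bR$.
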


\begin{definition}[Definition~6, Section II.2 of \cite{GK02}]
\label{def:os}
A matrix is called an \emph{oscillatory matrix} if it is totally
nonnegative and some positive power of it is totally positive.
\end{definition}

\begin{theorem}[Theorem~10, Section II.7 of \cite{GK02}]
\label{thm:oscriterion}
Let $\bA=[a_{i,j}] $ be a totally nonnegative matrix.
Then $\bA$ is oscillatory if and only if

\begin{enumerate}
\item $\bA$ is invertible, and
\item $a_{i,j}>0$ whenever $|i-j|=1$.
\end{enumerate}
\end{theorem}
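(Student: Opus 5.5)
The statement has two directions. Necessity is short and structural. Sufficiency carries the real content: from invertibility and positive off-diagonal neighbours of a totally nonnegative $\bA$, we must produce a totally positive power. I would show that $\bA^{n-1}$ works, using Cauchy--Binet together with a positivity lemma for ``near-diagonal'' minors.

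\emph{Necessity.} Suppose $\bA$ is oscillatory, so $\bA^m$ is totally positive for some $m$. Then $(\det\bA)^m=\det\bA^m>0$, hence $\bA$ is invertible.

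Next I record that an invertible totally nonnegative matrix has positive diagonal entries, and indeed positive principal minors. This follows from the Hadamard--Fischer (Koteljanskii) inequality $\det\bA\le a_{1,1}\det\bA[2..n]$, applied inductively. I take it as a standard fact, proved in \cite{GK02}.

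Now suppose $a_{i,i+1}=0$. For $k<i$, the $2\times2$ minor on rows $k,i$ and columns $i,i+1$ equals $-a_{k,i+1}a_{i,i}\ge0$. Since $a_{i,i}>0$, this gives $a_{k,i+1}=0$. Similarly, for $l>i+1$, the minor on rows $i,i+1$ and columns $i+1,l$ equals $-a_{i,l}a_{i+1,i+1}\ge 0$, so $a_{i,l}=0$. Iterating these two propagations shows that the whole block $\bA[\{1,\dots,i\},\{i+1,\dots,n\}]$ vanishes. This block-lower-triangular shape is preserved under multiplication, so no power of $\bA$ is totally positive. The case $a_{i+1,i}=0$ is symmetric, working with the transpose.

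\emph{Sufficiency.} Call index sets $\alpha=(\alpha_1<\dots<\alpha_p)$ and $\beta=(\beta_1<\dots<\beta_p)$ \emph{adjacent} if $|\alpha_k-\beta_k|\le1$ for all $k$ and $\max(\alpha_k,\beta_k)<\min(\alpha_{k+1},\beta_{k+1})$. The key lemma is that, under hypotheses 1 and 2, $\det\bA[\alpha,\beta]>0$ for all adjacent $\alpha,\beta$.

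Granting the lemma, the conclusion follows from Cauchy--Binet. The expansion
\[
\det\bA^{m}[\alpha,\beta]=\sum\det\bA[\alpha,\gamma^{(1)}]\det\bA[\gamma^{(1)},\gamma^{(2)}]\cdots\det\bA[\gamma^{(m-1)},\beta]
\]
consists of nonnegative terms. It therefore suffices to exhibit one chain $\alpha=\gamma^{(0)},\gamma^{(1)},\dots,\gamma^{(m)}=\beta$ of pairwise adjacent index sets. Moving each coordinate one unit toward its target per step, and processing coordinates in an order that keeps the sets strictly increasing (leftmost first when moving left, rightmost first when moving right), reaches $\beta$ from $\alpha$ in at most $n-1$ steps. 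Padding with trivial steps $\gamma\to\gamma$, which are principal minors and hence positive, gives exactly $m=n-1$ steps. So $\bA^{n-1}$ is totally positive.

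\emph{Main obstacle: the adjacent-minor lemma.} The submatrix $\bB=\bA[\alpha,\beta]$ is totally nonnegative. Its diagonal entries $a_{\alpha_k,\beta_k}$ are either diagonal entries of $\bA$ or entries on its first super- or subdiagonal, so all are positive.

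I would argue by induction on $p$. Suppose $\det\bB=0$ while all smaller adjacent minors are positive. Sylvester's determinant identity then forces certain neighbouring bordered minors to vanish. I would combine this with the zero-propagation argument from the necessity part, applied to suitable compound or $2\times2$ minors. The aim is to extract a vanishing rectangular block of $\bA$ that straddles the diagonal, whose existence would make $\det\bA=0$ by a Laplace/Fischer-type bound, contradicting invertibility.

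If this direct route becomes messy, my fallback is the approach in \cite{GK02}: first prove that an invertible totally nonnegative matrix has all quasi-principal minors positive, where quasi-principal means $|\alpha_k-\beta_k|$ small relative to the gaps. That argument uses the Koteljanskii inequality on the submatrix $\bA[\alpha\cup\beta]$. The adjacent case then follows by specialisation.
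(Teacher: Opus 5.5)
Your necessity argument is correct. Your reduction of sufficiency to positivity of adjacent minors via Cauchy--Binet is also sound, and it is the architecture of Gantmacher and Krein's own proof: your adjacent minors are their quasi-principal minors. The paper itself only cites that theorem and gives no argument.

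The bound of at most $n-1$ steps is asserted rather than proved, but the gap is routine. Within a run of coordinates moving right, processed rightmost-first, the $j$th coordinate is blocked only while its right neighbour sits next to it. A downward induction then shows it arrives by step $\max_{m\ge j}(\beta_m-\alpha_m+m-j)\le (n-p)+(p-1)=n-1$, with $m$ ranging over the run.

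The genuine gap is the adjacent-minor lemma itself. It carries all the content of sufficiency, and neither of your two routes establishes it.

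Your main route aims at a vanishing rectangular block of $\bA$, but that is not the obstruction that arises. Take the smallest nontrivial case $n=3$, $\alpha=(1,3)$, $\beta=(2,3)$, and suppose $a_{12}a_{33}=a_{13}a_{32}$. Then $a_{13},a_{32}>0$, and $(a_{12},a_{13})=\lambda(a_{32},a_{33})$ with $\lambda>0$. Total nonnegativity gives $0\le\det\bA[\{1,2\},\{2,3\}]=-\lambda\det\bA[\{2,3\}]$, so a principal minor is nonpositive. The contradiction comes from a rank deficiency of rows $1,3$ interacting with the intermediate row $2$ and with positivity of principal minors; no zero block appears.

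In general, Sylvester's identity plus zero propagation in a suitable totally nonnegative matrix of bordered minors yields vanishing families of $p\times p$ minors. That is, it gives linear dependences among the rows indexed by $\alpha$, or the columns indexed by $\beta$, over ranges of columns or rows. The real work is to show that these dependences force $\bA$ to be singular. That argument must use both corner minors $\det\bA[\alpha\setminus\{\alpha_p\},\beta\setminus\{\beta_p\}]$ and $\det\bA[\alpha\setminus\{\alpha_1\},\beta\setminus\{\beta_1\}]$ from the induction hypothesis, together with the gap condition $\max(\alpha_k,\beta_k)<\min(\alpha_{k+1},\beta_{k+1})$. This step is not supplied.

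Your fallback is misstated. Invertibility and total nonnegativity alone do not make quasi-principal minors positive: $\bI$ has the quasi-principal minor $a_{12}=0$. Hypothesis~2 is precisely the case $p=1$ and must drive the induction. Koteljanskii's inequality on $\bA[\alpha\cup\beta]$ cannot do the job either. It bounds only principal minors of that submatrix, while $\det\bA[\alpha,\beta]$ is a non-principal minor of it whenever $\alpha\neq\beta$.
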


\begin{proposition}[Proposition~6$^\circ$, Section II.7 of \cite{GK02}]
\label{prop:ostnos}
The product of an oscillatory matrix and a nonsingular totally nonnegative
matrix is oscillatory.
\end{proposition}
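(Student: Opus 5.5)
We verify the two conditions of the criterion in \cref{thm:oscriterion} for the product. Let $\bA=[a_{i,j}]$ be oscillatory and $\bM=[m_{i,j}]$ be nonsingular and totally nonnegative, and consider $\bA\bM$; the case $\bM\bA$ is symmetric. By \cref{prop:tnprod}, $\bA\bM$ is totally nonnegative, so \cref{thm:oscriterion} applies to it. By the \emph{only if} direction of the same theorem, the oscillatory matrix $\bA$ is invertible and satisfies $a_{i,i+1}>0$ and $a_{i+1,i}>0$. Hence $\bA\bM$ is invertible, which settles the first condition.

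For the second condition, all terms in the product are nonnegative, so
\[
(\bA\bM)_{i,i+1}=\sum_k a_{i,k}m_{k,i+1}\ \ge\ a_{i,i+1}\,m_{i+1,i+1},\qquad
(\bA\bM)_{i+1,i}\ \ge\ a_{i+1,i}\,m_{i,i}.
\]
It therefore suffices to show that a nonsingular totally nonnegative matrix has strictly positive diagonal. Once this is known, both off-diagonal entries are positive, and \cref{thm:oscriterion} gives that $\bA\bM$ is oscillatory.

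The positivity of the diagonal is the main point. My first approach is the Hadamard--Fischer inequality for totally nonnegative matrices, $0<\det\bM\le m_{1,1}m_{2,2}\cdots m_{n,n}$, which is classical in \cite{GK02}. This forces every $m_{i,i}>0$.

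If I want to avoid that inequality, I would argue directly with $2\times 2$ minors. Suppose $m_{i,i}=0$. For $l,j>i$, the minor on rows $\{i,l\}$ and columns $\{i,j\}$ equals $-m_{i,j}m_{l,i}\ge 0$, so $m_{i,j}m_{l,i}=0$. Similarly, for $k,j<i$, the minor on rows $\{k,i\}$ and columns $\{j,i\}$ gives $m_{k,i}m_{i,j}=0$. There are three cases:
\begin{enumerate}
\item If row $i$ has a nonzero entry on both sides of the diagonal, then column $i$ vanishes entirely, contradicting invertibility.
\item If row $i$ vanishes on both sides, then the whole row $i$ is zero, again a contradiction.
\item If row $i$ is nonzero on exactly one side, say to the right, then column $i$ vanishes below and including the diagonal. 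Using further minors of the form $m_{k,j}m_{l,i}$, one shows the submatrix on rows $\{i,\dots,n\}$ and columns $\{1,\dots,i\}$ is zero. This block has $n-i+1$ rows and $i$ columns, so $\bM$ is singular.
\end{enumerate}
The bookkeeping in the third case is the only delicate step, and I expect this to be the main obstacle. This is why I would default to citing the Fischer-type inequality.
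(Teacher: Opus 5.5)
The paper gives no proof of this statement; it is imported directly from \cite{GK02}. Your argument is correct and follows the standard derivation. Oscillatory matrices are totally nonnegative, so \cref{prop:tnprod} and \cref{thm:oscriterion} reduce everything to $m_{i,i}>0$ for a nonsingular totally nonnegative matrix. The Hadamard--Fischer inequality $0<\det\bM\le\prod_i m_{i,i}$, which holds for totally nonnegative matrices and is available in \cite{GK02}, supplies this. The other order, totally nonnegative times oscillatory, is the one actually used in \cref{cor:tinvmos}, and it works the same way via $(\bM\bA)_{i,i+1}\ge m_{i,i}a_{i,i+1}$ and $(\bM\bA)_{i+1,i}\ge m_{i+1,i+1}a_{i+1,i}$. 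Note that your use of $\bM$ for the totally nonnegative factor clashes with the paper's $\bM$, which is the oscillatory one.

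Your elementary fallback also works, and its case~3 is not delicate, but the minor you name is the wrong one. Products involving $m_{l,i}$ with $l>i$ carry no information there, because those entries are already known to vanish. Instead, fix $j_0>i$ with $m_{i,j_0}>0$. For $l>i$ and $k\le i$, the minor on rows $\{i,l\}$ and columns $\{k,j_0\}$ equals $m_{i,k}m_{l,j_0}-m_{i,j_0}m_{l,k}=-m_{i,j_0}m_{l,k}\ge 0$, since $m_{i,k}=0$ for $k\le i$; hence $m_{l,k}=0$. Thus rows $i,\dots,n$ are supported on the $n-i$ columns $i+1,\dots,n$, so they are linearly dependent. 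The mirror subcase follows by applying this to $\bZ\bM\bZ$ or $\bM\trans$, both of which are totally nonnegative.
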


Oscillatory matrices possess many remarkable properties concerning sign
changes and nodal locations of eigenvectors. Here we only need the result on
sign changes.

\begin{definition}
Let $\bu=(u_1,\ldots,u_n)$ be a vector. Assigning signs to zero entries
allows us to define the minimum possible number of sign changes, denoted by
$S^{-}({\bu})$, and the maximum possible number of sign changes, denoted by
$S^{+}({\bu})$. When $S^{-}({\bu})=S^{+}({\bu})$, their common value is denoted
by $S({\bu})$.
\end{definition}

For example, $\bu_1=(1,0,1)$ and $\bu_2=(1,0,-1)$ satisfy
$S^{-}({\bu_1})=0$, $S^{+}({\bu_1})=2$, and $S({\bu_2})=1$. More precisely,
$S^{-}({\bu})=S^{+}({\bu})$ if and only if $u_1u_n\neq0$ and
$u_{i-1}u_{i+1}<0$ whenever $u_i=0$ for $i=2,\ldots,n-1$.

\begin{theorem}[Theorem~6, Section II.5 of \cite{GK02}]
\label{thm:osspec}
Let $\bA$ be an oscillatory matrix. Then
\begin{enumerate}
\item every eigenvalue of $\bA$ is simple, real, and positive;
\item if $\bu_k$ is an eigenvector corresponding to the $k$th largest
eigenvalue, then $S({\bu_k})=k-1$.
\end{enumerate}
\end{theorem}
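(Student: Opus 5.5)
We plan to prove both parts at once by working with a totally positive power $\bA^m$ and its compound matrices, applying Perron's theorem to each compound. Since $\bA$ and $\bA^m$ share eigenvectors and have eigenvalues $\lambda$ and $\lambda^m$, it suffices to get strict ordering and positivity of the eigenvalues of $\bA^m$ and then pull these facts back to $\bA$.

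\textbf{Eigenvalues.} Order the eigenvalues of $\bA$ by modulus, $|\lambda_1|\ge\cdots\ge|\lambda_n|$. For $j=1,\ldots,n$ let $C_j(\bA^m)$ be the $j$th compound matrix, whose entries are the $j\times j$ minors of $\bA^m$.
\begin{itemize}
\item Because $\bA^m$ is totally positive, $C_j(\bA^m)$ is entrywise positive.
\item Its eigenvalues are the products $\lambda_{i_1}^m\cdots\lambda_{i_j}^m$ with $i_1<\cdots<i_j$.
\item By Perron's theorem, $C_j(\bA^m)$ has a simple positive dominant eigenvalue, strictly larger in modulus than all others. This dominant eigenvalue must be $(\lambda_1\cdots\lambda_j)^m$.
\end{itemize}
Strict dominance over the product with $\lambda_j$ replaced by $\lambda_{j+1}$ gives $|\lambda_j|>|\lambda_{j+1}|$. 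Hence all moduli are distinct, and $\lambda_n\neq 0$ because $\bA^m$ is invertible. Non-real eigenvalues of a real matrix come in conjugate pairs of equal modulus, so distinct moduli force every $\lambda_j$ to be real. Each $\lambda_j^m=(\lambda_1\cdots\lambda_j)^m/(\lambda_1\cdots\lambda_{j-1})^m$ is then a quotient of positive numbers, so $\lambda_j^m>0$ for every $j$. Positivity of $\lambda_j$ itself comes from rerunning the argument with $\bA^{m+1}=\bA\cdot\bA^m$, which is totally positive (TN times TP with $\bA$ invertible). This gives $\lambda_j^{m+1}>0$ and $\lambda_j^m>0$, hence $\lambda_j>0$. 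Simplicity follows from the strict ordering.

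\textbf{Upper bound on sign changes.} The Perron vector of $C_j(\bA^m)$ is the compound vector $\bu_1\wedge\cdots\wedge\bu_j$. Hence, after normalizing signs, all $j\times j$ minors of $U_j=[\bu_1,\ldots,\bu_j]$ are nonzero and share a common sign. I would then prove the standard lemma: if all maximal minors of an $n\times j$ matrix $U$ are nonzero with the same sign, then $S^{+}(U\mathbf{c})\le j-1$ for every nonzero $\mathbf{c}$. The argument is by contradiction. Suppose $S^{+}(U\mathbf{c})\ge j$. Choose indices $i_1<\cdots<i_{j+1}$ at which $U\mathbf{c}$ alternates weakly in sign. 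Form the $(j+1)\times(j+1)$ matrix $[\,U\mathbf{c}\mid U\,]$ restricted to these rows; it is singular. Expand its determinant along the first column. All cofactor terms are weakly of one sign, and not all vanish, which contradicts singularity. Applying the lemma with $j=k$ and $\mathbf{c}=e_k$ gives $S^{+}(\bu_k)\le k-1$.

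\textbf{Lower bound on sign changes.} Consider $\bB=\bR\bA^{-1}\bR$.
\begin{itemize}
\item By \cref{prop:tninv}, $\bB$ is totally nonnegative.
\item By Jacobi's formula for minors of the inverse, $\bB^m=\bR\bA^{-m}\bR$ is totally positive, so $\bB$ is oscillatory.
\item The eigenvectors of $\bB$ are $\bR\bu_k$ with eigenvalues $\lambda_k^{-1}$, so $\bR\bu_k$ belongs to the $(n-k+1)$th largest eigenvalue of $\bB$.
\end{itemize}
The upper bound applied to $\bB$ gives $S^{+}(\bR\bu_k)\le n-k$. Combining this with the identity $S^{-}(\bu)+S^{+}(\bR\bu)=n-1$ yields $S^{-}(\bu_k)\ge k-1$. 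Since $S^{-}\le S^{+}$, we conclude $S(\bu_k)=k-1$.

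The main obstacle I expect is the sign-change lemma from the compound minors, in particular the bookkeeping with zero entries that the $S^{+}$ count requires. I would handle it with the cofactor expansion described above, choosing the $j+1$ rows so that the weak alternation of $U\mathbf{c}$ aligns with the alternating signs of the cofactors.
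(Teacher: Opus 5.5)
Your proposal is correct. It is essentially the classical Gantmacher--Krein proof, which the paper cites rather than reproduces: Perron's theorem on the compounds of a totally positive power gives the spectrum, the equal-sign maximal-minor lemma gives $S^{+}(\bu_k)\le k-1$, and the dual oscillatory matrix $\bR\bA^{-1}\bR$ together with $S^{-}(\bu)+S^{+}(\bR\bu)=n-1$ gives the lower bound. The one step you should state explicitly is why not all cofactor terms vanish. Any $j$ rows of $U$ form a nonsingular matrix, so $U\mathbf{c}$ with $\mathbf{c}\neq\mathbf{0}$ has at most $j-1$ zero entries and is therefore nonzero at some of the $j+1$ chosen rows.
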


\begin{definition}
A \emph{Jacobi matrix} is a real square matrix
$\bA=[a_{i,j}]$
such that
$a_{i,j}=0$
whenever
$|i-j|\ge2$.
Throughout this paper we adopt the convention of
\cite[Section II.1]{GK02}; in contrast to some authors,
we do not require Jacobi matrices to be symmetric or irreducible.
\end{definition}

Recall that the leading principal minors of a matrix are the determinants of the principal submatrices.

\begin{proposition}[Item 6e, Section II.3 of \cite{GK02}]
\label{prop:osjacobi}
A Jacobi matrix
$\bA=[a_{i,j}]$
is totally nonnegative if and only if

\begin{enumerate}
\item $a_{i,j}\ge0$ whenever $|i-j|=1$, and
\item every leading principal minor is positive.
\end{enumerate}
\end{proposition}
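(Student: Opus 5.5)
The plan is to prove the sufficiency direction by reducing every minor of a Jacobi matrix to products of \emph{contiguous} principal minors and off-diagonal entries, and then to show that positivity of the leading principal minors propagates to all contiguous principal minors. Necessity of condition~1 is immediate, since $a_{i,j}$ with $|i-j|=1$ is itself a $1\times1$ minor. For condition~2, the nonnegativity of leading principal minors is part of total nonnegativity. Their strict positivity is what I expect holds under the nonsingularity implicit in the use of this criterion alongside \cref{thm:oscriterion}; I would state that hypothesis explicitly. The zero matrix shows that, without nonsingularity, ``positive'' must be read as ``nonnegative'' in the ``only if'' direction. The main work is the converse.

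\emph{Step 1: contiguous principal minors are positive.} Write $D_k$ for the $k$th leading principal minor ($D_0=1$) and set $c_k=a_{k,k-1}a_{k-1,k}\ge0$. Expanding along the last row gives
\[
D_k=a_{k,k}D_{k-1}-c_kD_{k-2}.
\]
Hence the ratios $d_k=D_k/D_{k-1}>0$ satisfy $d_1=a_{1,1}$ and $d_k=a_{k,k}-c_k/d_{k-1}$. For the principal submatrix on a contiguous index block $\{p,\ldots,q\}$, the same recursion holds for the ratios $e_k$ of its leading minors, with start $e_p=a_{p,p}$. Since $c_p/d_{p-1}\ge0$, we have $e_p\ge d_p>0$. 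Inductively, if $e_{k-1}\ge d_{k-1}>0$, then $c_k/e_{k-1}\le c_k/d_{k-1}$, so $e_k\ge d_k>0$. Thus every contiguous principal minor $\prod_{k=p}^q e_k$ is positive.

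\emph{Step 2: factorization of an arbitrary minor.} Take rows $i_1<\cdots<i_m$ and columns $j_1<\cdots<j_m$. The tridiagonal pattern forces a structure on the submatrix. Whenever the row and column index sets ``separate'' at some position, meaning $i_s\ne j_s$ or there is a gap of at least $2$ in the indices, the submatrix is block (upper or lower) triangular. Each diagonal block is then either a $1\times1$ off-diagonal entry $a_{i,i\pm1}$, or zero (when $|i_s-j_s|\ge2$), or a principal submatrix on a contiguous index block. Hence the minor is either $0$ or a product of nonnegative off-diagonal entries and contiguous principal minors. By Step 1, it is nonnegative. This proves total nonnegativity.

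I expect Step 2 to be the main obstacle, since it requires a careful combinatorial case analysis of how the row and column index sets interleave. I would organize it by induction on $m$: look at the first position $s$ where either $i_s\ne j_s$ or $i_{s+1}>i_s+1$. At that position, split off a triangular corner, check that the remaining submatrix is again a submatrix of a Jacobi matrix, and apply the induction hypothesis to it.
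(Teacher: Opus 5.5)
The paper gives no proof of this proposition; it only cites Gantmacher--Krein. It also only ever uses the sufficiency direction, in \cref{prop:jacobi_oscillatory} and \cref{lem:tinvtn}.

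\textbf{Sufficiency.} Your proof of this direction is correct.
\begin{itemize}
\item In Step~1 the comparison is sound, including the base case $e_p=a_{p,p}\ge a_{p,p}-c_p/d_{p-1}=d_p$. It also shows inductively that no $e_{k-1}$ vanishes, so the recursion is well defined.
\item In Step~2 your first stopping position $s$ works. If $i_s<j_s$, the entries of $B$ in rows $1,\dots,s$ and columns $s+1,\dots,m$ vanish. In column $s$, only the entry $a_{i_s,j_s}$ survives among rows $1,\dots,s$. Hence $\det B$ equals $a_{i_s,j_s}$, times the contiguous principal minor on $i_1,\dots,i_{s-1}$, times the minor on the remaining rows and columns.
\item The case $i_s>j_s$ is the transpose of this one.
\item A row gap with $i_t=j_t$ for $t\le s$ gives a block upper triangular splitting.
\end{itemize}
Induction on $m$ then closes the argument.

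\textbf{Necessity as stated.} You are right that it fails as literally written. The zero matrix, or the $2\times 2$ all-ones matrix, is a totally nonnegative Jacobi matrix with a vanishing leading principal minor. You cannot repair this by reading ``positive'' as ``nonnegative'' in both directions. The matrix $\operatorname{diag}(0,-1)$ has nonnegative off-diagonal entries and nonnegative leading principal minors, yet it is not totally nonnegative. Your Step~2 in fact gives the correct unconditional characterization directly: nonnegative off-diagonal entries together with nonnegative principal minors on all \emph{contiguous} index blocks.

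\textbf{The missing piece.} You leave unproved the amended necessity claim, that nonsingularity plus total nonnegativity forces every $D_k>0$; you only say you ``expect'' it. It follows from your own recursion $D_k=a_{k,k}D_{k-1}-c_kD_{k-2}$.
\begin{itemize}
\item Total nonnegativity gives $D_j\ge 0$ and $c_k\ge 0$, and nonsingularity gives $D_n>0$.
\item If $D_{k-1}=0$, then $D_k=-c_kD_{k-2}\le 0$.
\item So $D_k>0$ forces $D_{k-1}>0$, and downward induction from $k=n$ makes every leading principal minor positive.
\end{itemize}
Add this and state the nonsingularity hypothesis explicitly in the necessity direction. The proposal is then a complete, self-contained proof of the corrected statement, covering everything the paper actually needs.
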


\begin{proposition}\label{prop:jacobi_oscillatory}
Let $\bA=[a_{ij}]$ be a symmetric Jacobi matrix satisfying
\[
a_{ii}>0,\qquad a_{ij}>0\ (|i-j|=1),\qquad
a_{ii}>\sum_{j\ne i}|a_{ij}|,\quad i=1,\ldots,n.
\]
Then $\bA$ is oscillatory.
\end{proposition}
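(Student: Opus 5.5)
The plan is to verify the two hypotheses of the oscillation criterion (Theorem~\ref{thm:oscriterion}). Both can be read off Proposition~\ref{prop:osjacobi}, once we show that the leading principal minors of $\bA$ are positive.

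\textbf{Step 1: total nonnegativity.} By assumption, $\bA$ is a Jacobi matrix with $a_{ij}>0\ge 0$ for $|i-j|=1$. So the first condition of Proposition~\ref{prop:osjacobi} holds, and it remains to show that every leading principal minor is positive.

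Let $\bA_k$ be the $k\times k$ leading principal submatrix. It is symmetric and strictly diagonally dominant with positive diagonal. Deleting rows and columns only removes off-diagonal terms from each row sum, so the inequality $a_{ii}>\sum_{j\ne i,\,j\le k}|a_{ij}|$ persists. By the Gershgorin circle theorem, every eigenvalue of $\bA_k$ lies in a disc centred at some $a_{ii}>0$ with radius strictly less than $a_{ii}$. Hence every eigenvalue has positive real part; since $\bA_k$ is symmetric, the eigenvalues are real and therefore positive. Consequently $\det\bA_k>0$.

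An alternative route that avoids Gershgorin is the three-term recurrence
\[
D_k=a_{kk}D_{k-1}-a_{k-1,k}^2D_{k-2}.
\]
One shows by induction that $D_k>(a_{kk}-a_{k-1,k})D_{k-1}>0$. I would nevertheless use the Gershgorin argument, since it is cleaner. Proposition~\ref{prop:osjacobi} then gives that $\bA$ is totally nonnegative.

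\textbf{Step 2: the oscillation criterion.} The case $k=n$ of Step~1 gives $\det\bA>0$, so $\bA$ is invertible. The superdiagonal and subdiagonal entries are positive by hypothesis. Theorem~\ref{thm:oscriterion} therefore yields that $\bA$ is oscillatory.

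\textbf{Main obstacle.} The only point needing care is that diagonal dominance is inherited by the leading principal submatrices, together with the positivity of their determinants. Both follow immediately from the Gershgorin argument, so I expect no real difficulty. The case $n=1$ is trivial: a positive $1\times1$ matrix is totally positive.
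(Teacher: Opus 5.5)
Your proposal is correct and follows essentially the same route as the paper: Gershgorin plus symmetry gives positive eigenvalues, Proposition~\ref{prop:osjacobi} then gives total nonnegativity, and Theorem~\ref{thm:oscriterion} gives oscillation. The only cosmetic difference is that the paper applies Gershgorin once to $\bA$ to get positive definiteness, from which the positive leading principal minors follow implicitly, whereas you apply Gershgorin to each leading principal submatrix directly and so make that step explicit.
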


\begin{proof}
For symmetric $\bA$, the Gershgorin circle theorem
\cite[Sec.~6.1]{HJ13}, with $a_{ii}>0$ and
$a_{ii}>\sum_{j\ne i}|a_{ij}|$, yields positive eigenvalues, hence
positive definiteness. Proposition~\ref{prop:osjacobi} and
Theorem~\ref{thm:oscriterion} imply that $\bA$ is oscillatory.
\end{proof}

%

\begin{example}
The matrix $\bA_\sigma= \bigl[\exp\{-\sigma|i-j|\}\bigr] $
is oscillatory by \cite[Page~79]{GK02}.
More generally, if $x_1<\cdots<x_n,$
then $A_\sigma= \bigl[\exp\{-\sigma|x_i-x_j|\}\bigr]$
is also oscillatory.
\end{example}

\begin{theorem}[{\cite[Theorem~4.3.5]{fallat2011totally}}]  \label{thm:TP-ineq} 
  Let $\bA$ be totally nonnegative.  Then for any  $\bv=\bA \bu$,
\[
 S({\bv}) \leq S({\bu}).
\] 
\end{theorem}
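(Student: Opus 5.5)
We prove the stronger inequality $S^{-}(\bA\bu)\le S^{-}(\bu)$. When both sides are defined in the sense of $S$, this gives the statement, since $S=S^{-}$ there. The plan has three steps: reduce to a matrix with few columns, settle the totally positive case by a determinant expansion, and pass to totally nonnegative $\bA$ by approximation.

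\emph{Step 1: reduction.} Let $m=S^{-}(\bu)$. We may assume $\bu\neq 0$, since otherwise $\bA\bu=0$ and $S^{-}(\bA\bu)=0$. Split the index set $\{1,\dots,n\}$ into $m+1$ consecutive blocks $I_0<I_1<\dots<I_m$ on which $\bu$ weakly keeps a constant sign. Choose them so that each block carries a nonzero entry of $\bu$ and the signs alternate between consecutive blocks. Then
\[
\bu=\sum_{k=0}^{m}(-1)^k\bu^{(k)},
\]
where each $\bu^{(k)}\ge 0$ is supported on $I_k$ and is nonzero (after an overall sign flip of $\bu$ if necessary). Let $\mathbf{W}$ be the $n\times(m+1)$ matrix with columns $\bu^{(k)}$, and let $\mathbf{c}=((-1)^k)_{k=0}^m$. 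Then $\bA\bu=\mathbf{B}\mathbf{c}$ with $\mathbf{B}=\bA\mathbf{W}$.

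The columns of $\mathbf{W}$ are nonnegative and have disjoint supports in increasing order. Hence every square submatrix of $\mathbf{W}$ either is singular or is, up to the row order already matching the column order, a product of a permutation-free block structure. Consequently its minors are $0$ or positive products, so $\mathbf{W}$ is totally nonnegative. By Proposition~\ref{prop:tnprod}, $\mathbf{B}$ is totally nonnegative.

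\emph{Step 2: the totally positive case.} Assume first that $\bA$ is totally positive. By the Cauchy--Binet formula, each $(m+1)\times(m+1)$ minor of $\mathbf{B}$ is a sum of products of positive minors of $\bA$ with nonnegative minors of $\mathbf{W}$. At least one such minor of $\mathbf{W}$ is positive: take one row from the support of each $\bu^{(k)}$. Hence every maximal minor of $\mathbf{B}$ is positive.

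Suppose, for contradiction, that $\bv=\mathbf{B}\mathbf{c}$ has $S^{-}(\bv)\ge m+1$. Then there are indices $i_1<\dots<i_{m+2}$ with $(-1)^j v_{i_j}>0$ for all $j$, up to a global sign. Form the $(m+2)\times(m+2)$ matrix $[\,\mathbf{B}_{\{i_j\}}\mid \bv_{\{i_j\}}\,]$. Its last column is $\mathbf{B}_{\{i_j\}}\mathbf{c}$, so its determinant vanishes. Expanding along the last column gives
\[
0=\sum_{j}(-1)^{j+m+2}\,v_{i_j}\,\det \mathbf{B}_{\{i_\ell\}_{\ell\ne j}}.
\]
Every term has the same strict sign $(-1)^m$, because the minors are positive and the $(-1)^j v_{i_j}$ are positive. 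This is a contradiction.

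\emph{Step 3: passage to totally nonnegative $\bA$.} This is where I expect the main obstacle. Use the classical density fact (Whitney; see \cite{fallat2011totally}) that every totally nonnegative matrix is a limit of totally positive matrices $\bA_\varepsilon\to\bA$. The function $S^{-}$ is lower semicontinuous: strict sign changes of $\bA\bu$ at chosen positions persist for $\bA_\varepsilon\bu$ when $\varepsilon$ is small. Therefore
\[
S^{-}(\bA\bu)\le \liminf_{\varepsilon\to0} S^{-}(\bA_\varepsilon\bu)\le S^{-}(\bu),
\]
where the last inequality is Step 2 applied to each $\bA_\varepsilon$.

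If one prefers to avoid the density theorem, an alternative is to perturb $\bA$ to $\bA_\sigma\bA\bA_\sigma$, using the oscillatory kernel from the Example. Proposition~\ref{prop:ostnos} and powers then give total positivity. This route needs the invertibility hypotheses of Proposition~\ref{prop:ostnos}, which makes it the more delicate option, so I would use the density argument first.
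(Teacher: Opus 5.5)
Your proof is correct. The paper gives no argument for this statement; it imports it by citation from Fallat--Johnson. So what you provide is a self-contained replacement rather than a variant of an in-paper proof.

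Proving $S^{-}(\bA\bu)\le S^{-}(\bu)$ is the right reading of the statement, since $S(\bA\bu)$ need not be defined even when $S(\bu)$ is. Your route is the classical Gantmacher--Krein one:
\begin{itemize}
\item the block decomposition $\bu=\mathbf{W}\mathbf{c}$ reduces to $m+1$ columns;
\item Cauchy--Binet makes every $(m+1)\times(m+1)$ minor of $\mathbf{B}=\bA\mathbf{W}$ positive when $\bA$ is totally positive;
\item the Laplace expansion of the singular bordered matrix then has $m+2$ terms of the same strict sign $(-1)^m$;
\item lower semicontinuity of $S^{-}$ plus Whitney's density theorem handles totally nonnegative $\bA$.
\end{itemize}
The cost is Whitney's theorem, itself a nontrivial external result. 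Another standard route avoids density: factor a totally nonnegative matrix into nonnegative elementary bidiagonal factors, each of which visibly cannot increase $S^{-}$. That trades the density theorem for a factorization theorem.

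There are two small points to fix.

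First, your justification that $\mathbf{W}$ is totally nonnegative is garbled. Say instead: each row of $\mathbf{W}$ has at most one nonzero entry, and the block index is nondecreasing in the row index. Hence in any square submatrix the only term of the determinant expansion that can be nonzero is the diagonal one, a product of nonnegative entries. Also, the total nonnegativity of $\mathbf{B}$ is never used afterward.

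Second, the aside does not work as written. $\bA_\sigma$ is oscillatory but not totally positive: for $n\ge 3$ its minor on rows $\{1,2\}$, columns $\{2,3\}$ vanishes. And taking powers of $\bA_\sigma\bA\bA_\sigma$ destroys the approximation of $\bA$. The repairable version is $\mathbf{G}_1\bA\mathbf{G}_2$, with $\mathbf{G}_i$ suitable powers of $\bA_\sigma$ of the appropriate sizes; these are totally positive and tend to $\bI$ as $\sigma\to\infty$. By Cauchy--Binet this product is totally positive exactly when $\bA$ has full rank. So for rank-deficient $\bA$ the density argument you chose is genuinely needed.
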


\section{Main Theorem}

\begin{definition}\label{def:D2}
  Define the $n \times (n-2)$ \emph{twice-differencing matrix} $\bDtwo$ whose
  $j$-th column is
 \[ 
  \left[ \mathbf{0}_{j-1}\trans, 1, -2, 1, \mathbf{0}_{n-j-2}\trans \right] \trans, 
  \qquad j = 1,\ldots, n-2.
\]
  Set $\bM = \bDtwo\trans \bOmega \bDtwo$ and $\bT = \bDtwo\trans \bDtwo$, both square
  matrices of order $n-2$.
\end{definition}

\begin{proposition}
The columns of $X$ are orthogonal to the columns of $\bDtwo$.
Moreover, $ \begin{bmatrix} \bX&\bDtwo \end{bmatrix}$ is invertible.
\end{proposition}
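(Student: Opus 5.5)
The plan is to verify orthogonality directly from the definitions, and then to obtain invertibility from a dimension count. Both parts are elementary; there is no real obstacle.

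\textbf{Orthogonality.} Fix a column of $\bDtwo$, say the $j$th one. Its only nonzero entries are $1,-2,1$, in rows $j,j+1,j+2$. Its inner product with the all-ones column of $\bX$ is therefore $1-2+1=0$. Its inner product with the column $(1,2,\ldots,n)\trans$ is
\[
j-2(j+1)+(j+2)=0 .
\]
This gives $\bX\trans\bDtwo=\mathbf{0}$.

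\textbf{Invertibility.} The matrix $\begin{bmatrix}\bX&\bDtwo\end{bmatrix}$ is square of order $n$, so it suffices to show that its columns are linearly independent. The proof has three steps.
\begin{enumerate}
\item The columns of $\bX$ are independent. Assuming $n\ge2$, the $2\times2$ submatrix formed by the first two rows is $\begin{bmatrix}1&1\\1&2\end{bmatrix}$, which has determinant $1$.
\item The columns of $\bDtwo$ are independent. The $(n-2)\times(n-2)$ submatrix formed by rows $1,\ldots,n-2$ is lower triangular with ones on the diagonal: the $j$th column has its first nonzero entry, equal to $1$, in row $j$. Hence this submatrix has determinant $1$.
\item The column spaces of $\bX$ and $\bDtwo$ intersect only in $\mathbf{0}$. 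Suppose $\bX\mathbf{a}+\bDtwo\mathbf{b}=\mathbf{0}$. Multiplying on the left by $\bX\trans$ and using the orthogonality already established gives $\bX\trans\bX\mathbf{a}=\mathbf{0}$. Since $\bX$ has full column rank, $\bX\trans\bX$ is invertible, so $\mathbf{a}=\mathbf{0}$. Then $\bDtwo\mathbf{b}=\mathbf{0}$, and step~2 gives $\mathbf{b}=\mathbf{0}$.
\end{enumerate}

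Consequently the $n$ columns of $\begin{bmatrix}\bX&\bDtwo\end{bmatrix}$ are linearly independent, and the matrix is invertible.

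An alternative route is also available. By orthogonality, the Gram matrix of $\begin{bmatrix}\bX&\bDtwo\end{bmatrix}$ is block diagonal, with blocks $\bX\trans\bX$ and $\bT=\bDtwo\trans\bDtwo$. Both blocks are positive definite by steps~1 and~2, so the Gram matrix is nonsingular, and hence so is $\begin{bmatrix}\bX&\bDtwo\end{bmatrix}$.
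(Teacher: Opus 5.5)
Your proof is correct and follows essentially the paper's route: orthogonality by direct computation, full column rank of $\bX$ and $\bDtwo$, and then invertibility through the block-diagonal Gram matrix. The paper packages the last step as the explicit inverse formula \eqref{eq:X-D2-inv}, which your alternative route yields immediately and which is reused in Lemma~\ref{lem:asim}; your explicit rank checks for $\bX$ and $\bDtwo$ supply the details the paper only asserts.
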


\begin{proof}
The orthogonality follows from a direct computation. Furthermore, both $\bX$
and $\bDtwo$ have full column rank, so both $\bX^{\trans}\bX$ and
$\bDtwo^{\trans}\bDtwo$ are invertible, and consequently
\begin{equation}
\label{eq:X-D2-inv}
\begin{bmatrix}
\bX&\bDtwo
\end{bmatrix}^{-1}
=
\begin{bmatrix}
\left(\bX^{\trans}\bX\right)^{-1}&\mathbf{O}\\
\mathbf{O} &\left(\bDtwo^{\trans}\bDtwo\right)^{-1}
\end{bmatrix}
\begin{bmatrix}
\bX^{\trans}\\
\bDtwo^{\trans}
\end{bmatrix},
\end{equation}
which proves the claim.
\end{proof}

\begin{lemma}
\label{lem:asim}
The matrix $\bOmega$ is similar to $\mathbf{O}_2\oplus(\bT^{-1}\bM)$,
and therefore they have the same spectrum. Moreover, if $\bx$ is a   eigenvector of $\bT^{-1}\bM$, then
$\bDtwo\bx$
is an eigenvector of $\bOmega$ corresponding to the same eigenvalue.
\end{lemma}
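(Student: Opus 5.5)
We conjugate $\bOmega$ by the invertible matrix $\bA=\begin{bmatrix}\bX&\bDtwo\end{bmatrix}$ and compute $\bA^{-1}\bOmega\bA$ in block form, using the explicit inverse \eqref{eq:X-D2-inv}. Two facts about $\bQ$ are needed. First, $\bQ\bX=\mathbf{O}$, since $\bQ$ is the orthogonal projection onto the complement of the column space of $\bX$. Second, $\bQ\bDtwo=\bDtwo$, because the columns of $\bDtwo$ are orthogonal to those of $\bX$ (previous proposition), so $\bX^{\trans}\bDtwo=\mathbf{O}$. By symmetry of $\bQ$ we also get $\bX^{\trans}\bQ=\mathbf{O}$ and $\bDtwo^{\trans}\bQ=\bDtwo^{\trans}$.

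With these facts the block computation is direct. We have
\[
\bOmega\bA=\bQ\bP\bQ\begin{bmatrix}\bX&\bDtwo\end{bmatrix}=\begin{bmatrix}\mathbf{O}&\bQ\bP\bDtwo\end{bmatrix}.
\]
Multiplying on the left by $\begin{bmatrix}\bX^{\trans}\\ \bDtwo^{\trans}\end{bmatrix}$ gives
\[
\begin{bmatrix}\mathbf{O}&\bX^{\trans}\bQ\bP\bDtwo\\ \mathbf{O}&\bDtwo^{\trans}\bQ\bP\bDtwo\end{bmatrix}=\begin{bmatrix}\mathbf{O}&\mathbf{O}\\ \mathbf{O}&\bDtwo^{\trans}\bP\bDtwo\end{bmatrix}.
\]
Here $\bDtwo^{\trans}\bP\bDtwo=\bDtwo^{\trans}\bQ\bP\bQ\bDtwo=\bM$. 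Finally, applying the block-diagonal factor from \eqref{eq:X-D2-inv} yields
\[
\bA^{-1}\bOmega\bA=\mathbf{O}_2\oplus(\bT^{-1}\bM),
\]
which proves the similarity. Equality of spectra, with algebraic multiplicities, follows because similar matrices share a characteristic polynomial.

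For the eigenvector statement, suppose $\bT^{-1}\bM\bx=\lambda\bx$ with $\bx\neq\mathbf{0}$. Then $\begin{bmatrix}\mathbf{0}\\ \bx\end{bmatrix}$ is an eigenvector of $\bA^{-1}\bOmega\bA$ for $\lambda$, so $\bA\begin{bmatrix}\mathbf{0}\\ \bx\end{bmatrix}=\bDtwo\bx$ is an eigenvector of $\bOmega$ for $\lambda$. This vector is nonzero since $\bDtwo$ has full column rank. One can also verify it directly:
\[
\bOmega\bDtwo\bx=\bQ\bP\bDtwo\bx=\bDtwo\bT^{-1}\bM\bx=\lambda\bDtwo\bx.
\]
The middle equality uses the decomposition $\bQ\bP\bDtwo=\bA\bA^{-1}\bQ\bP\bDtwo$ together with the vanishing top block $\bX^{\trans}\bQ\bP\bDtwo=\mathbf{O}$, which gives $\bQ\bP\bDtwo=\bDtwo\bT^{-1}\bM$.

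The only step needing care is this last identity, $\bQ\bP\bDtwo=\bDtwo\bT^{-1}\bM$. It says that the columns of $\bQ\bP\bDtwo$ lie in the column space of $\bDtwo$, which is exactly the orthogonal complement of the columns of $\bX$. It is precisely what the vanishing of the $(1,2)$ block encodes. Everything else is routine bookkeeping with the explicit inverse.
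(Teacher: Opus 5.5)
Your proposal is correct and follows essentially the same route as the paper. You conjugate $\bOmega$ by the invertible matrix $[\bX\ \bDtwo]$ using its explicit block inverse; your use of $\bQ\bX=\mathbf{O}$ and $\bQ\bDtwo=\bDtwo$ plays the role of the paper's $\bOmega\bX=\mathbf{O}$ and $\bM=\bDtwo^{\trans}\bOmega\bDtwo$. You then lift $\bx$ by two leading zeros and undo the similarity, getting $\bDtwo\bx\neq\mathbf{0}$ from full column rank; your extra direct check $\bQ\bP\bDtwo=\bDtwo\bT^{-1}\bM$ is valid but not needed.
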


\begin{proof}
By definition, $\bOmega\bX=\mathbf{O}$. Here and below, $\mathbf{O}$
 denotes a zero matrix of appropriate size. Using \eqref{eq:X-D2-inv}, we obtain
\[
\begin{aligned}
\begin{bmatrix}
\bX&\bDtwo
\end{bmatrix}^{-1}
\bOmega
\begin{bmatrix}
\bX&\bDtwo
\end{bmatrix}
&=
\begin{bmatrix}
\left(\bX^{\trans}\bX\right)^{-1}&\mathbf{O}\\
\mathbf{O}&\left(\bDtwo^{\trans}\bDtwo\right)^{-1}
\end{bmatrix}
\begin{bmatrix}
\bX^{\trans}\\
\bDtwo^{\trans}
\end{bmatrix}
\bOmega
\begin{bmatrix}
\bX&\bDtwo
\end{bmatrix}\\
&=
\begin{bmatrix}
\left(\bX^{\trans}\bX\right)^{-1}&\mathbf{O}\\
\mathbf{O}&\left(\bDtwo^{\trans}\bDtwo\right)^{-1}
\end{bmatrix}
\begin{bmatrix}
\mathbf{O}_2&\mathbf{O}\\
\mathbf{O}&\bDtwo^{\trans}\bOmega\bDtwo
\end{bmatrix}
=
\mathbf{O}_2\oplus(\bT^{-1}\bM),
\end{aligned}
\]
so $\bOmega$ is similar to $\mathbf{O}_2\oplus(\bT^{-1}\bM)$.

For the second statement, suppose $\bT^{-1}\bM\bx=\lambda\bx$ for some
nonzero vector $\bx$, and let $\widehat{\bx}$ be the vector obtained from
$\bx$ by inserting two zeros at the beginning. Then
$\bigl(\mathbf{O}_2\oplus(\bT^{-1}\bM)\bigr)\widehat{\bx}=\lambda\widehat{\bx}$.
Undoing the similarity transformation gives
\[
\bOmega
\begin{bmatrix}
\bX&\bDtwo
\end{bmatrix}
\widehat{\bx}
=
\lambda
\begin{bmatrix}
\bX&\bDtwo
\end{bmatrix}
\widehat{\bx} =\lambda\bDtwo\bx,
\qquad\text{hence}\qquad
\bOmega\bDtwo\bx
=
\lambda \bDtwo\bx.
\]
Since $\bDtwo$ has full column rank and $\bx\neq\mathbf0$, we have
$\bDtwo\bx\neq\mathbf0$, so $\bDtwo\bx$ is indeed an eigenvector of $\bOmega$
corresponding to $\lambda$.
\end{proof}

\begin{proposition}
\label{prop:mos}
The matrix $M$ has the form
\[
M=
\begin{bmatrix}
8 & 2 & 0 & \cdots & 0\\
2 & 8 & 2 & \ddots & \vdots\\
0 & 2 & \ddots & \ddots & 0\\
\vdots & \ddots & \ddots & 8 & 2\\
0 & \cdots & 0 & 2 & 8
\end{bmatrix}
\]
and is oscillatory.
\end{proposition}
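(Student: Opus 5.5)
We first compute $\bM=\bDtwo\trans\bOmega\bDtwo$ and then deduce oscillation from Proposition~\ref{prop:jacobi_oscillatory}.

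\emph{Removing the projection.} Since the columns of $\bX$ are orthogonal to those of $\bDtwo$, we have $\bQ\bDtwo=\bDtwo$. Hence
\[
\bM=\bDtwo\trans\bQ\bP\bQ\bDtwo=\bDtwo\trans\bP\bDtwo .
\]

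\emph{Computing the entries.} The $(j,k)$ entry of $\bDtwo\trans\bP\bDtwo$ is a second difference in each index of $\phi(i-l)=|i-l|^3$. Writing $d=k-j$, a direct expansion gives
\[
m_{j,k}=\sum_{a,b\in\{0,1,2\}} c_a c_b\, |d+b-a|^3,\qquad (c_0,c_1,c_2)=(1,-2,1).
\]
Grouping by $s=b-a$, this equals the fourth central difference $\sum_{s=-2}^{2} w_s |d+s|^3$ with weights $(1,-4,6,-4,1)$, which is the convolution of $(1,-2,1)$ with itself.

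We then split on the value of $d$:
\begin{itemize}
\item For $|d|\ge 2$, the arguments $d+s$ never change sign, so $|x|^3$ coincides with a cubic polynomial there. The fourth difference of a cubic vanishes, so $m_{j,k}=0$.
\item For $d=0$: $6\cdot 0-4(1+1)+(8+8)=8$.
\item For $d=\pm1$: $|{-1}|^3-4\cdot 0+6\cdot 1-4\cdot 8+27=2$.
\end{itemize}
This gives the displayed tridiagonal form. The entries depend only on $d$ and not on $j,k$ separately, since $\bP$ is Toeplitz.

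\emph{Oscillation.} $\bM$ is a symmetric Jacobi matrix with diagonal entries $8>0$ and off-diagonal entries $2>0$. Every row sum of off-diagonal moduli is at most $4<8$, so $\bM$ is strictly diagonally dominant. Proposition~\ref{prop:jacobi_oscillatory} then gives that $\bM$ is oscillatory.

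The only real obstacle is the bookkeeping in the entry computation, in particular the $|d|\ge2$ case, where one must check that the window $d+s$, $s\in\{-2,\dots,2\}$, lies on one side of $0$. It does, because for $|d|\ge2$ the smallest $|d+s|$ is $|d|-2\ge0$, and $|x|^3$ is a cubic polynomial on each closed half-line, including at $0$.
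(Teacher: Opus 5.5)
Your proof is correct and follows essentially the same route as the paper: use $\bQ\bDtwo=\bDtwo$ to get $\bM=\bDtwo\trans\bP\bDtwo$, compute its tridiagonal form, and apply Proposition~\ref{prop:jacobi_oscillatory} via strict diagonal dominance. Your fourth-difference computation fills in the ``direct computation'' that the paper leaves to the reader: the weights are $(1,-4,6,-4,1)$, and the entries vanish for $|d|\ge 2$ because $|x|^3$ is a cubic on each closed half-line.
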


\begin{proof}
Since the columns of $\bDtwo$ are orthogonal to those of $\bX$, we have
$\bQ\bDtwo=\bDtwo$, and hence
$\bM=\bDtwo^{\trans}\bOmega\bDtwo=\bDtwo^{\trans}\bP \bDtwo$.
A direct computation shows that $\bDtwo^{\trans}\bP \bDtwo$ has the stated
tridiagonal form. Since $\bM$ is a Jacobi matrix that is strictly diagonally
dominant, Proposition~\ref{prop:jacobi_oscillatory} implies that $M$ is oscillatory.
\end{proof}

\begin{lemma}
\label{lem:tinvtn}
The matrix $\bT^{-1}$ is totally nonnegative.
\end{lemma}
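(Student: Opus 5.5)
Compute $\bT=\bDtwo^{\trans}\bDtwo$ explicitly and recognise it as a banded (pentadiagonal) Toeplitz matrix. Each column of $\bDtwo$ is $(1,-2,1)$ placed at rows $j,j+1,j+2$. Hence $\bT$ has diagonal entries $1+4+1=6$, first off-diagonal entries $-2-2=-4$, second off-diagonal entries $1$, and zeros elsewhere. So $\bT$ is the square of the $(n-2)\times(n-2)$ second-difference matrix up to boundary corrections. More precisely, I would factor $\bT=\bDtwo^{\trans}\bDtwo$ differently, as a product of bidiagonal matrices. Write $\bDtwo=\mathbf{D}^{(1)}\mathbf{D}^{(2)}$, where $\mathbf{D}^{(2)}$ is the $(n-1)\times(n-2)$ first-difference matrix with columns $(\dots,1,-1,\dots)$ and $\mathbf{D}^{(1)}$ is the $n\times(n-1)$ first-difference matrix. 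Equivalently, $\bDtwo\trans=\mathbf{E}_2\mathbf{E}_1$, where each $\mathbf{E}$ is a rectangular bidiagonal matrix with $1$ on one diagonal and $-1$ on the next.

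The key step is to apply Proposition~\ref{prop:tninv}. Since $\bT$ is invertible (Gram matrix of full-column-rank $\bDtwo$), it suffices to show that $\bR\bT\bR$ is totally nonnegative. Conjugating by $\bR$ multiplies entry $(i,j)$ by $(-1)^{i+j}$. This gives the pentadiagonal matrix with entries $6,4,1$ on the diagonal, first and second off-diagonals, all nonnegative. To get total nonnegativity, I would write $\bR\bT\bR=(\bR\bDtwo^{\trans}\bR')(\bR'\bDtwo\bR)$, with sign matrices $\bR,\bR'$ of sizes $n-2$ and $n$ chosen so that $\bR'\bDtwo\bR$ has all entries nonnegative. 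Concretely, $(\bR'\bDtwo\bR)_{ij}=(-1)^{i+j}(\bDtwo)_{ij}$, so the column pattern $1,-2,1$ becomes $1,2,1$ up to a global sign that is consistent across columns. Then $\bR'\bDtwo\bR=\mathbf{B}_1\mathbf{B}_2$, where $\mathbf{B}_1$ ($n\times(n-1)$) and $\mathbf{B}_2$ ($(n-1)\times(n-2)$) are bidiagonal matrices with entries $1$ on two adjacent diagonals. Indeed, $(1,1)*(1,1)$ convolution gives $(1,2,1)$.

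Bidiagonal matrices with nonnegative entries are totally nonnegative. Any square submatrix of such a matrix is, after removing zero rows or columns, block triangular with nonnegative diagonal, or has a zero determinant. This is the standard fact, and I would check it quickly by noting that a square submatrix of a lower bidiagonal nonnegative matrix is either lower triangular with nonnegative diagonal or has a zero row or column structure forcing determinant $0$ or a product of nonnegative diagonal blocks. Then Proposition~\ref{prop:tnprod} gives that $\bR'\bDtwo\bR$ is TN. Its transpose is also TN, so $\bR\bT\bR$, the product of the transpose and the matrix itself, is TN. Proposition~\ref{prop:tninv} then yields that $\bT^{-1}$ is TN.

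The main obstacle is bookkeeping the sign matrices: verifying that $(-1)^{i+j}$ applied to $\bDtwo$ yields a uniformly signed matrix. Column $j$ has entries at rows $j,j+1,j+2$, and the signs $(-1)^{2j},(-1)^{2j+1},(-1)^{2j+2}$ times $1,-2,1$ give $1,2,1$. This holds uniformly, so the choice works. I also need to confirm that the rectangular bidiagonal total nonnegativity claim holds for all minors, not just contiguous ones. If that argument becomes awkward, the fallback is to note that $\bR\bT\bR$ is itself a product of square-free-form Jacobi factors and invoke Proposition~\ref{prop:osjacobi} on padded square versions.
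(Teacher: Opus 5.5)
Your proof is correct and follows the paper's argument: you reduce via Proposition~\ref{prop:tninv} to showing that $\bR\bT\bR=|\bDtwo|^{\trans}|\bDtwo|$ is totally nonnegative, then conclude with Proposition~\ref{prop:tnprod} and closure under transposition. The only difference is the final step. The paper notes that $|\bDtwo|$ consists of columns $2,\ldots,n-1$ of the $n\times n$ tridiagonal matrix with diagonal $2$ and off-diagonals $1$, which is totally nonnegative by Proposition~\ref{prop:osjacobi}, whereas you factor $|\bDtwo|$ into two nonnegative bidiagonal matrices. Your factorization is valid, but your sketch of why bidiagonal matrices are totally nonnegative is loose (a square submatrix of a lower bidiagonal matrix can be upper triangular), so your fallback of applying Proposition~\ref{prop:osjacobi} to padded square bidiagonal matrices is the cleaner justification.
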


\begin{proof}
By Proposition~\ref{prop:tninv}, it suffices to show that $\bR\bT\bR$ is
totally nonnegative. Let $|\bDtwo|$ denote the matrix obtained from $\bDtwo$
by taking the absolute value of every entry; a direct computation gives
$\bR\bT\bR=|\bDtwo|^{\trans}|\bDtwo|$. By Proposition~\ref{prop:tnprod}, it
is enough to prove that $|\bDtwo|$ is totally nonnegative. Let $J$ be the
Jacobi matrix whose diagonal entries are $2$ and whose super- and
sub-diagonal entries are $1$; by Proposition~\ref{prop:osjacobi}, $J$ is
totally nonnegative, and since $|\bDtwo|$ is a submatrix of $J$, it is also
totally nonnegative.
\end{proof}

\begin{corollary}
\label{cor:tinvmos}
The matrix $\bT^{-1}\bM$ is oscillatory.
\end{corollary}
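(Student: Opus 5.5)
The corollary follows by combining Lemma~\ref{lem:tinvtn}, Proposition~\ref{prop:mos}, and Proposition~\ref{prop:ostnos}. The only care needed is the order of the factors and the nonsingularity hypothesis.

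First, $\bT^{-1}$ is nonsingular, being the inverse of $\bT=\bDtwo^{\trans}\bDtwo$. The latter is invertible because $\bDtwo$ has full column rank $n-2$, as noted in the proposition on $\begin{bmatrix}\bX&\bDtwo\end{bmatrix}$. By Lemma~\ref{lem:tinvtn}, $\bT^{-1}$ is totally nonnegative, so it is a nonsingular totally nonnegative matrix. By Proposition~\ref{prop:mos}, $\bM$ is oscillatory. Proposition~\ref{prop:ostnos} then says the product of an oscillatory matrix and a nonsingular totally nonnegative matrix is oscillatory, and this gives the claim for $\bT^{-1}\bM$.

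The only possible obstacle is whether Proposition~\ref{prop:ostnos} covers the product in either order, since here the totally nonnegative factor sits on the left. In \cite{GK02} the statement is symmetric in the two orders. If one wants to avoid relying on that, I would verify the product directly via Theorem~\ref{thm:oscriterion}.

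\begin{itemize}
\item Total nonnegativity of $\bT^{-1}\bM$ follows from Proposition~\ref{prop:tnprod}.
\item Invertibility is immediate, since both factors are invertible; $\bM$ is positive definite by Proposition~\ref{prop:jacobi_oscillatory}.
\item Positivity of the sub- and super-diagonal entries is checked as follows. We have $(\bT^{-1}\bM)_{i,i+1}=\sum_k (\bT^{-1})_{ik}\bM_{k,i+1}\ge (\bT^{-1})_{ii}\bM_{i,i+1}$, because every term in the sum is nonnegative. The factor $(\bT^{-1})_{ii}$ is positive because $\bT^{-1}$ is positive definite, and $\bM_{i,i+1}=2>0$. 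The subdiagonal entries are handled the same way.
\end{itemize}

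This alternative route uses only results already stated and closes the argument regardless of how Proposition~\ref{prop:ostnos} is phrased.
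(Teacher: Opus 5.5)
Your proof is correct and is the same as the paper's: $\bT^{-1}$ is a nonsingular totally nonnegative matrix by Lemma~\ref{lem:tinvtn}, $\bM$ is oscillatory by Proposition~\ref{prop:mos}, and Proposition~\ref{prop:ostnos} gives the conclusion. Your backup check via Theorem~\ref{thm:oscriterion} is also valid: total nonnegativity of the product, invertibility, and $(\bT^{-1}\bM)_{i,i+1}\ge(\bT^{-1})_{ii}\bM_{i,i+1}>0$, with the analogous bound below the diagonal. It settles the order-of-factors question, which the paper does not address explicitly.
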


\begin{proof}
By Lemma~\ref{lem:tinvtn}, $\bT^{-1}$ is an invertible totally nonnegative
matrix, while Proposition~\ref{prop:mos} shows that $M$ is oscillatory.

The conclusion follows immediately from
Proposition~\ref{prop:ostnos}.
\end{proof}

\begin{theorem}
\label{thm:main}
The matrix $\bOmega$ is positive semidefinite with nullity $2$, and all of
its positive eigenvalues are simple. Let
$\lambda_1>\cdots>\lambda_{n-2}>0=\lambda_{n-1}=\lambda_n$
be the eigenvalues of $\bOmega$. Then the following statements hold.
\begin{enumerate}
\item
The kernel of $\bOmega$ is spanned by $(1,\ldots,1)^{\trans}$ and
$(1,\ldots,n)^{\trans}$.
\item
For $k=1,\ldots,n-2$, the eigenvector corresponding to $\lambda_k$ is even
whenever $k-1$ is even, and odd whenever $k-1$ is odd.
\item
The eigenvector of $\bOmega$ corresponding to $\lambda_k$ has $k +1$ sign
changes for $k=1,\ldots,n-2$.
\end{enumerate}
\end{theorem}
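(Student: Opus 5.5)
The plan is to transfer everything from $\bT^{-1}\bM$ to $\bOmega$ through Lemma~\ref{lem:asim}. Write $m=n-2$.

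\emph{Spectrum and kernel.} By Corollary~\ref{cor:tinvmos} and Theorem~\ref{thm:osspec}, $\bT^{-1}\bM$ has $m$ simple, real, positive eigenvalues $\lambda_1>\cdots>\lambda_m>0$. Its eigenvector $\bx_k$ for $\lambda_k$ satisfies $S(\bx_k)=k-1$. By Lemma~\ref{lem:asim}, the spectrum of $\bOmega$ is these numbers together with $0$ of algebraic multiplicity $2$. Since $\bOmega=\bQ\bP\bQ$ is symmetric with nonnegative eigenvalues, it is positive semidefinite. Its nullity equals the algebraic multiplicity of $0$, namely $2$. Because $\bQ\bX=\mathbf O$, we have $\bOmega\bX=\mathbf O$, and the two columns of $\bX$ are independent, so they span the kernel. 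This gives item 1. Lemma~\ref{lem:asim} also shows that $\bu_k:=\bDtwo\bx_k$ spans the (one-dimensional) eigenspace of $\bOmega$ for $\lambda_k$.

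\emph{Parity.} I will check the intertwining relation $\bZ_n\bDtwo=\bDtwo\bZ_m$, which holds because the stencil $(1,-2,1)$ is palindromic. It gives $\bZ\bT\bZ=\bT$. Next, $\bZ\bP\bZ=\bP$, because $|(n+1-i)-(n+1-j)|=|i-j|$. Also $\bZ\bQ\bZ=\bQ$, since $\bZ$ maps the column space of $\bX$ onto itself and is orthogonal. Together these give $\bZ\bM\bZ=\bM$, so $\bT^{-1}\bM$ is centrosymmetric. By Proposition~\ref{prop:csym}, each $\bx_k$ is even or odd, and by the intertwining relation $\bu_k$ inherits the same parity.

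To decide which parity occurs, I use $S(\bx_k)=k-1$, which in particular means the end entries are nonzero. If $\bx_k$ is odd, its first and last entries have opposite signs, so the number of sign changes is odd. If $\bx_k$ is even, the sign pattern is symmetric about the centre, so the changes pair up. Two points at the centre need care. For odd length, a zero middle entry would need neighbours of opposite sign, which contradicts evenness. For even length, the two middle entries coincide. Hence the number of changes is even. Therefore the parity of $\bx_k$, and so of $\bu_k$, is that of $k-1$, which is item 2.

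\emph{Sign changes.} Put $\widetilde{\bx}=(0,0,\bx_k,0,0)$. Then $\bu_k$ is the second difference of $\widetilde{\bx}$, taken entrywise as $u_i=\widetilde x_{i}-2\widetilde x_{i+1}+\widetilde x_{i+2}$.

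For the lower bound $S^-(\bu_k)\ge k+1$, I will use a discrete Rolle argument applied twice. If a finite sequence padded by zeros at both ends has $p$ sign changes, then its first difference has at least $p+1$ sign changes. Starting from $S(\bx_k)=k-1$, one difference gives at least $k$ changes and the second gives at least $k+1$.

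The upper bound $S^+(\bu_k)\le k+1$ is the main obstacle. Differencing can create arbitrarily many extra sign changes for general vectors, so it must use the eigen-equation. I would first rewrite $\bM\bx_k=\lambda_k\bT\bx_k=\lambda_k\bDtwo^{\trans}\bu_k$. This expresses $\bDtwo^{\trans}\bu_k$ through the positive tridiagonal $\bM$ applied to $\bx_k$, and I would try to bound the changes of $\bu_k$ by those of $\bx_k$ plus two, using the local three-term relation between consecutive entries.

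If that fails, I would argue by contradiction with a counting argument over all eigenvectors. The Rolle lower bounds already give $S^-(\bu_j)\ge j+1$ for every $j$. A strict excess for some $k$ would then be played against Theorem~\ref{thm:TP-ineq}, applied to $\bT^{-1}$ and to $\bx=\lambda^{-1}\bT^{-1}\bM\bx$, together with orthogonality of the $\bu_j$ and the fact that they are orthogonal to the columns of $\bX$. The aim would be to force exactness, $S(\bu_k)=k+1$, which is item 3.
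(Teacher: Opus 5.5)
\paragraph{Items 1 and 2, and the lower bound in item 3.} Your treatment of the spectrum, the kernel and the parity follows the paper's argument. You also correctly fill in details the paper leaves implicit: the intertwining $\bZ_n\bDtwo=\bDtwo\bZ_{n-2}$, and why an even vector with $S^-=S^+$ has an even number of sign changes. Your discrete Rolle argument for $S^-(\bu_k)\ge k+1$ is correct. It is a more elementary route than the paper's, which writes $\bR_n\bDtwo=|\bDtwo|\bR_{n-2}$ and uses variation diminishing for the totally nonnegative matrix $|\bDtwo|$. Rolle also gives the bound directly in the $S^-$ form, which is exactly what must be paired with an $S^+$ upper bound.

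\paragraph{The gap: the upper bound in item 3.} You do not prove $S^+(\bu_k)\le k+1$. You rightly call it the crux and write down the right relation $\bDtwo\trans\bu_k=\bT\bx_k=\lambda_k^{-1}\bM\bx_k$. However, neither the ``local three-term relation'' idea nor the fallback counting argument is carried out, and it is unclear how orthogonality of the $\bu_j$ could cap $S^+(\bu_k)$.

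The missing idea is to apply the same sign flip to $\bDtwo\trans$.
\begin{enumerate}
\item Since $\bR_{n-2}\bDtwo\trans=|\bDtwo|\trans\bR_n$ with $|\bDtwo|\trans$ totally nonnegative, Theorem~\ref{thm:TP-ineq} in its precise form $S^-(\bA\bv)\le S^-(\bv)$ gives $S^-(\bR_{n-2}\bDtwo\trans\bu_k)\le S^-(\bR_n\bu_k)$.
\item The identity $S^-(\bR_m\bv)=m-1-S^+(\bv)$ converts this into $S^+(\bu_k)\le S^+(\bDtwo\trans\bu_k)+2=S^+(\bM\bx_k)+2$.
\item Finally, $S^+(\bM\bx_k)\le S^+(\bx_k)=k-1$, because $\bM$ is nonsingular and totally nonnegative. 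By Proposition~\ref{prop:tninv}, $\bR\bM^{-1}\bR$ is totally nonnegative. Apply variation diminishing to $\bR\bx_k=(\bR\bM^{-1}\bR)(\bR\bM\bx_k)$ and flip back with the same identity.
\end{enumerate}
This is how the paper closes item 3. Combined with your Rolle bound, it gives $S(\bu_k)=k+1$.
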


\begin{proof}
By Corollary~\ref{cor:tinvmos}, the matrix $\bT^{-1}\bM$ is an $(n-2)\times(n-2)$ oscillatory matrix. 
By Theorem~\ref{thm:osspec}, all of its eigenvalues are simple, real, and strictly positive, satisfying
\[
\lambda_1 > \lambda_2 > \cdots > \lambda_{n-2} > 0.
\]
According to Lemma~\ref{lem:asim}, the nonzero eigenvalues of $\bOmega$ coincide with those of $\bT^{-1}\bM$, while its remaining two eigenvalues are zero. Since $\bOmega$ is symmetric, it is positive semidefinite with nullity $2$, and all its positive eigenvalues are simple.

\smallskip\noindent\textit{Statement (1).}
By definition, $\bOmega\bX = \mathbf{O}$. Since $\bX$ has full column rank ($\rank(\bX)=2$), the two columns of $\bX$, namely $(1,\ldots,1)\trans$ and $(1,\ldots,n)\trans$, form a basis for $\ker(\bOmega)$.

\smallskip\noindent\textit{Statement (2).}
Let $\bx_k$ be a   eigenvector of $\bT^{-1}\bM$ corresponding to $\lambda_k$ for $k=1,\ldots,n-2$.
Note that $\bZ \bT^{-1}\bM \bZ = (\bZ \bT^{-1}\bZ)(\bZ \bM \bZ) = \bT^{-1}\bM$, which implies that $\bT^{-1}\bM$ is centrosymmetric, so by Proposition~\ref{prop:csym}, every eigenvector $\bx_k$ is either even or odd.
By Theorem~\ref{thm:osspec}, the   eigenvector $\bx_k$ of the oscillatory matrix $\bT^{-1}\bM$ has exactly $k-1$ sign changes, i.e., $S({\bx_k}) = k-1$.
Since an even vector always possesses an even number of sign changes and an odd vector possesses an odd number of sign changes, $\bx_k$ must be even when $k-1$ is even, and odd when $k-1$ is odd.
Furthermore, because $\bDtwo$ preserves parity (i.e., $\bDtwo \bv$ is even/odd whenever $\bv$ is even/odd), Lemma~\ref{lem:asim} guarantees that the corresponding eigenvector $\bpsi_k = \bDtwo \bx_k$ of $\bOmega$ inherits the exact same alternating parity structure.

\smallskip\noindent\textit{Statement (3).}
Let $\bx_k$ be the  eigenvector of $\bT^{-1}\mathbf{M}$ associated with
$\lambda_k$, and let $\bpsi_k=\bDtwo\bx_k$ be the corresponding eigenvector of $\bOmega$. Recall that $|\bDtwo|$ is defined by taking the entry-wise absolute value of $\mathbf{D}_2$, and we use the subscripts of $\mathbf{R}$ to indicate the dimensions of sign-alternating diagonal matrices.
Since $\bR_n\bDtwo=|\bDtwo|\bR_{n-2}$ with $|\bDtwo|$ totally nonnegative, Theorem~\ref{thm:TP-ineq} gives
$S(\bR_n\bpsi_k) =S(|\bDtwo|\bR_{n-2}\bx_k) \le S(\bR_{n-2}\bx_k)$; using
$S(\bR_m\bv)=m-1-S(\bv)$ and $S(\bx_k)=k-1$, we obtain 
$ (n-1)-S(\bpsi_k)\le n-k-2  $, and therefore
$S(\bpsi_k)\ge k+1$.
Similarly, $\bDtwo^\top\bpsi_k=\bT\bx_k$ and
$\bR_{n-2}\bDtwo^\top=|\bDtwo|^\top\bR_n$ imply
$S(\bpsi_k)\le S(\bT\bx_k)+2$; since
$\bT\bx_k=\lambda_k^{-1}\mathbf{M}\bx_k$ and $\mathbf{M}$ is totally nonnegative,
\[
S(\bpsi_k)\le S(\mathbf{M}\bx_k)+2
           \le S(\bx_k)+2
           =k+1.
\]
Hence $S(\bpsi_k)=k+1$.
\end{proof}

\section{Parity of the Spline Eigenbasis Functions}
\label{sec:spline-parity}

We now transfer the parity property of the eigenvectors of $\bOmega$ to the
continuous spline functions they represent.

We may, without loss of generality, apply an affine rescaling to the knots so that they lie symmetrically in $[-1,1]$. Specifically, let $x_i=(2i-n-1)/(n-1), \; i=1,\ldots,n,$  and define $\mathbf{s}=(x_1,\ldots,x_n)\trans$ and $\bX'=[\bone,\mathbf{s}]$. 

The space $\operatorname{span}\{\bone,\mathbf{s}\}$  remains identical to $\operatorname{span}\{\bone,(1,\ldots,n)\trans\}$, and hence $\operatorname{span}(\bX')=\operatorname{span}(\bX)$ and $\bQ$ is invariant under rescaling. Moreover, $\mathbf{s}$ satisfies $\mathbf{Z}\mathbf{s}=-\mathbf{s}$ and $\mathbf{1}^{\mathsf{T}}\mathbf{s}=0$, so that $\mathbf{X'}^{\mathsf{T}}\mathbf{X'}=\operatorname{diag}\left(n,\mathbf{s}^{\mathsf{T}}\mathbf{s}\right).$
The corresponding matrices $\bP$ and $\bOmega$ are changed only by an overall scalar factor, so their eigenvectors remain unchanged. For simplicity, we continue to denote the transformed matrices by $\bX,\bP,\bQ,\bOmega$.

  Define the kernel evaluation vector $\bphi(x)=(\phi(x-x_1),\ldots,\phi(x-x_n))\trans$ with $\phi(r)=|r|^3$, which satisfies $\bphi(-x)=\bZ\bphi(x)$. We then follow \cite{TH2018} and write $\bOmega=\sum_{k=1}^{n-2}\lambda_k\bpsi_k\bpsi_k\trans$ with unit eigenvectors $\bpsi_k$ (as in Theorem~\ref{thm:main}). Define the spline basis $\{f_1,\ldots,f_n\}$ with $f_1(x)=1$, $f_2(x)=x$, and for $k=1,\ldots,n-2$,
\begin{equation}
\label{eq:fk_def}
f_{k+2}(x)=\lambda_k^{-1}\bpsi_k\trans\left[\bphi(x)-\bP\bX(\bX\trans\bX)^{-1}\begin{bmatrix}1\\ x\end{bmatrix}\right],
\end{equation}
which projects $\bphi(x)$ off $\operatorname{span}\{\bone,\mathbf{s}\}$ before pairing with $\bpsi_k$. Using $\bone\trans\mathbf{s}=0$, \eqref{eq:fk_def} simplifies to
\begin{equation}
\label{eq:fk_simp}
f_{k+2}(x)=\lambda_k^{-1}\bpsi_k\trans\bphi(x)
-\lambda_k^{-1}\left(\frac{\bpsi_k\trans\bP\bone}{n}
+\frac{\bpsi_k\trans\bP\mathbf{s}}{\mathbf{s}\trans\mathbf{s}}\,x\right).
\end{equation}

\begin{theorem}
\label{thm:spline-parity}
For $k=1,\ldots,n-2$, $f_{k+2}$ is even when $k-1$ is even, and odd when $k-1$ is odd.
\end{theorem}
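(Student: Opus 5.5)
The plan is to compute $f_{k+2}(-x)$ directly from the simplified form \eqref{eq:fk_simp} and use two symmetries: $\bphi(-x)=\bZ\bphi(x)$, and the parity of $\bpsi_k$ from Theorem~\ref{thm:main}(2). Write $\bZ\bpsi_k=\epsilon_k\bpsi_k$, where $\epsilon_k=1$ if $k-1$ is even and $\epsilon_k=-1$ if $k-1$ is odd. Since $\bZ$ is a symmetric involution, we get
\[
\bpsi_k\trans\bphi(-x)=\bpsi_k\trans\bZ\bphi(x)=(\bZ\bpsi_k)\trans\bphi(x)=\epsilon_k\,\bpsi_k\trans\bphi(x).
\]
So the leading term $\lambda_k^{-1}\bpsi_k\trans\bphi(x)$ already has the required parity. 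It remains to show that the affine correction $c_0+c_1x$ in \eqref{eq:fk_simp} has the same parity. Here $c_0=\bpsi_k\trans\bP\bone/n$ and $c_1=\bpsi_k\trans\bP\mathbf{s}/(\mathbf{s}\trans\mathbf{s})$, up to the factor $\lambda_k^{-1}$.

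For the correction, I would first note that $\bP=[\,|x_i-x_j|^3\,]$ is centrosymmetric after the rescaling. Indeed, $x_{n+1-i}=-x_i$, so $|x_{n+1-i}-x_{n+1-j}|=|x_i-x_j|$, which gives $\bZ\bP\bZ=\bP$. Using $\bZ\bone=\bone$, we obtain
\[
\bpsi_k\trans\bP\bone=\bpsi_k\trans\bZ\bP\bZ\bone=\epsilon_k\,\bpsi_k\trans\bP\bone.
\]
Using $\bZ\mathbf{s}=-\mathbf{s}$, we obtain $\bpsi_k\trans\bP\mathbf{s}=-\epsilon_k\,\bpsi_k\trans\bP\mathbf{s}$. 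Two cases follow. If $\bpsi_k$ is even ($\epsilon_k=1$), then $c_1=0$ and the correction is the constant $c_0$, which is even. If $\bpsi_k$ is odd ($\epsilon_k=-1$), then $c_0=0$ and the correction is $c_1x$, which is odd. In either case $c_0+c_1(-x)=\epsilon_k(c_0+c_1x)$.

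Combining the two steps gives $f_{k+2}(-x)=\epsilon_k f_{k+2}(x)$ for all $x$, which is the claim.

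The only step needing care is the bookkeeping after the rescaling. The eigenvectors $\bpsi_k$ are unchanged by the rescaling, since $\bOmega$ only changes by a scalar factor, so their parity from Theorem~\ref{thm:main} still holds. The identity $\bphi(-x)=\bZ\bphi(x)$ relies on $x_{n+1-i}=-x_i$ and on $\phi$ being even. Once these are checked, the argument is a direct symmetry computation with no real obstacle.
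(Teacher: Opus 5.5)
Your proposal is correct and follows essentially the same route as the paper. The leading term $\bpsi_k\trans\bphi(x)$ inherits the parity of $\bpsi_k$ via $\bphi(-x)=\bZ\bphi(x)$, and centrosymmetry of $\bP$ forces $\bpsi_k\trans\bP\mathbf{s}=0$ in the even case and $\bpsi_k\trans\bP\bone=0$ in the odd case, so the affine correction has the matching parity (your identity $\bpsi_k\trans\bP\bone=\epsilon_k\,\bpsi_k\trans\bP\bone$ is just the paper's orthogonality-of-opposite-parity argument written out).
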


\begin{proof}
Let $g_k(x):=\bpsi_k\trans\bphi(x)$. Since $\bphi(-x)=\bZ\bphi(x)$ and $\bZ\bpsi_k=\pm\bpsi_k$ according as $k-1$ is even or odd (Theorem~\ref{thm:main}(2)), we have $g_k(-x)=(\bZ\bpsi_k)\trans\bphi(x)=\pm g_k(x)$, so $g_k$ inherits the parity of $\bpsi_k$.

Next, consider the correction term $c_k(x):=\frac{1}{n}\bpsi_k\trans\bP\bone+\frac{1}{\mathbf{s}\trans\mathbf{s}}(\bpsi_k\trans\bP\mathbf{s})\,x$.  Centrosymmetry of $\bP$ ($\bZ\bP=\bP\bZ$) implies that $\bP\bpsi_k$ shares the parity of $\bpsi_k$, so one of the inner products vanishes by orthogonality of opposite-parity vectors.

    When $k-1$ is even, $\bP\bpsi_k$ is even. Since $\mathbf{s}$ is odd, $\bpsi_k\trans\bP\mathbf{s}=0$, reducing $c_k(x)$ to a constant (even).

    When $k-1$ is odd, $\bP\bpsi_k$ is odd. Since $\bone$ is even, $\bpsi_k\trans\bP\bone=0$, reducing $c_k(x)$ to a multiple of $x$ (odd).

\noindent In both cases, $c_k(x)$ has the same parity as $g_k(x)$, and thus $f_{k+2}=\lambda_k^{-1}(g_k-c_k)$ has the claimed parity.
\end{proof}

\section*{Acknowledgments}
This work was partially supported by the National Science and Technology Council, Taiwan, under Grant Nos. MOST110-2118-M029-006-MY2,  NSTC112-2118-M-008-001-MY3, NSTC113-2115-M-110-010-MY3, and NSTC114-2111-M008-037.


\end{document}